\newtheorem{theorem}{Theorem}
\newtheorem{proposition}{Proposition}
\newtheorem{corollary}{Corollary}
\newtheorem{assumption}{Assumption}
\theoremstyle{definition}
\newtheorem{remark}{Remark}
\algrenewcommand\algorithmicrequire{\textbf{Input:}}
\algrenewcommand\algorithmicensure{\textbf{Output:}}
\newcommand{\leqnomode}{\tagsleft@true}
\newcommand{\reqnomode}{\tagsleft@false}
\newcommand{\tr}{\textup{tr}}
\newcommand{\Diag}{\textup{Diag}}
\newcommand{\diag}{\textup{diag}}
\newcommand{\Conv}{\textup{conv}}
\newcommand{\rank}{\textup{rank}}
\newcommand{\stirlingii}{\genfrac{\{}{\}}{0pt}{}}
\begin{document}

\title{Lagrangian Duality for Mixed-Integer Semidefinite Programming: Theory and Algorithms}
\author{Frank de Meijer \thanks{Delft Institute of Applied Mathematics, Delft University of Technology, The Netherlands, {\tt f.j.j.demeijer@tudelft.nl}}
	\and {Renata Sotirov}  \thanks{CentER, Department of Econometrics and OR, Tilburg University, The Netherlands, {\tt r.sotirov@uvt.nl}}}

\date{\today}

\maketitle

\begin{abstract}

This paper presents the Lagrangian duality theory for mixed-integer semidefinite programming (MISDP). We derive the Lagrangian dual problem and prove that the resulting Lagrangian dual bound dominates the  bound obtained from the continuous relaxation of the MISDP problem.  We  present a hierarchy of Lagrangian dual bounds by exploiting the theory of integer positive semidefinite matrices and propose three algorithms for obtaining those bounds. Our algorithms are variants of well-known algorithms for minimizing non-differentiable convex functions. 
The numerical results on the max-$k$-cut problem show that the Lagrangian dual bounds 
are substantially stronger than the semidefinite programming bound obtained by relaxing integrality, already for lower levels in the hierarchy.

\end{abstract}

\textbf{\textit{Keywords}}: mixed-integer semidefinite programming, discrete positive semidefinite matrices, max-$k$-cut problem, Lagrangian duality theory

\section{Introduction}

Mixed-integer semidefinite programming (MISDP)  may be seen as a generalization of mixed-integer linear programming (MILP), where the vector of variables is replaced by a positive semidefinite matrix variable in which some of the entries are integer and others real.
In integer semidefinite programming  (ISDP) all entries of the positive semidefinite matrix variable are required to be integer.

The combination of positive semidefiniteness and integrality allows to formulate various optimization problems as  mixed-integer semidefinite programs (MISDPs).
Since the 1990s, a few papers appeared
that present ISDP formulations for classical  discrete optimization problems e.g.,~\cite{Cvetkovic,Eisenblatter,Meurdesoif2005},
and several MISDP formulations of applied optimization problems, see e.g.,~\cite{GilGonzalezEtAl,YonekuraKanno,ZhengEtAl}.
In~\cite{DeMeijerSotirov23}, the authors introduce a generic approach for deriving MISDP formulations of binary quadratically constrained quadratic programs and binary quadratic matrix programs. They also show that  several optimization problems allow for novel MISDP formulations, which 
provide  new perspectives on solution approaches for those problems.

The MISDPs  can be used to  compute exact solutions for the optimization problems using generic MISDP solvers, e.g., branch-and-bound algorithms \cite{GallyEtAl, MatterPfetsch} or branch-and-cut algorithms \cite{deMeijerSotirovCG,KobayashiTakano}. 
Another solver that supports solving MISDPs while exploiting sparsity is GravitySDP~\cite{Hijazi}.
Numerical tests in those and related papers   show that solving an MISDP formulation of an optimization problem might be more beneficial than  solving 
a mixed-integer programming  (MIP)  formulation of the same problem via state-of-the-art MIP solvers.

A crucial feature of an efficient solver is being able to compute strong bounds fast.
Most of the current solving approaches for MISDPs  solve SDP  relaxations in each node of a branching tree. 
Since a generic way of solving an SDP relaxation is an interior point method, computational effort in each node of a branching tree may be (very) high.
It is well-known that interior point-based SDP solvers exhibit problems in terms of both time and memory for solving even medium-size semidefinite programs (SDPs). 
Moreover, interior point methods have difficulties with handling additional cutting planes,  including nonnegativity constraints that significantly strengthen SDP bounds. There exist  alternative approaches for solving SDP relaxations based on Alternating Direction Augmented Lagrangian (ADAL) methods, see e.g.,~\cite{HuSotirov,SunTohetAl,OliveiraEtAl}.
Those algorithms are first-order methods that have low memory requirement, but might suffer a tailing-off effect when there is need for a high precision solution. However, ADAL methods can handle a large number of  cutting planes together with semidefiniteness efficiently~\cite{deMeijeretAll24,DeMeijerSotirov2,DeMeijerEtAl}. 
The bounds resulting from relaxing a MISDP to an SDP
do not exploit integrality of the variables. The goal of this work is to provide alternative bounding approaches that do exploit both positive semidefiniteness and integrality.

In this paper we derive the Lagrangian duality theory for MISDP that enables us to derive a hiearchy of  Lagrangian dual bounds, and show how to compute those bounds using first-order methods. 
There exist results on superadditive duality theory for mixed-integer conic optimization~\cite{doi:10.1137/18M1210812}.  
However, the subadditive dual for conic mixed-integer programs does not yield straightforward solution procedures, while the Lagrangian duality  theory for MISDP
does.

\subsection*{Main results and outline}

We first introduce  the Lagrangian duality theory for MISDP. 
We derive a Lagrangian relaxation of the MISDP problem by dualizing the positive semidefinite (PSD) constraint and a part of the linear constraints. 
The aim is to dualize  constraints that are intractable in combination with the integrality constraints.
From the Lagrangian relaxation we derive the Lagrangian dual problem.
We relate the optimal value of the continuous relaxation of the MISDP problem, the  Lagrangian dual bound, 
and the optimal value of the MISDP problem within a  sandwich theorem, showing that the Lagrangian dual bound is always at least as strong as the bound obtained from the continuous SDP relaxation. We also derive conditions under which equalities throughout the sequence of the sandwich theorem follow.

For the case of purely integer SDPs, we introduce a hierarchy of  Lagrangian dual bounds by exploiting the theory of integer PSD matrices. Namely, we propose  partitioning a matrix variable into submatrices of  prescribed sizes and solving the Lagrangian dual problem  by exploiting finite generating sets of integer PSD matrices. The larger the size of the matrices in the partitions is, the stronger the Lagrangian dual bounds become. 
The resulting bounds are related to the so-called exact subgraph approach considered in~\cite{AdamsAnjosRendlWiegele,RendlGaar}.
In fact,  exact subgraph bounds are  Lagrangian dual bounds resulting from appropriate~ISDP models.

We propose three algorithms for computing the Lagrangian dual bounds.
We consider a variant of the deflected subgradient algorithm with the update scheme from~\cite{SheraliUlular}  
and Polyak’s stepsize~\cite{Polyak}. We also consider a version of the accelerated gradient method introduced by Nesterov~\cite{Nesterov1983AMF},
and a  version of the proximal bundle algorithm, see e.g.,~\cite{Kiwiel}. All three variants include projections onto the PSD cone to preserve dual feasibility in each iteration of
the algorithm, which distinguishes them from their classical versions. The subgradient methods were not used, up to date, for computing SDP bounds.
Variants of the proximal  bundle method have been used for obtaining  SDP bounds  of several optimization problems, see e.g.,~\cite{Fischer2006ComputationalEW,RendlGaar,Rendl2007BoundsFT}. 
The Lagrangian relaxations in these existing algorithms require solving an SDP relaxation in each bundle iteration. Instead, we compute the Lagrangian relaxations  over a discrete set. 

We evaluate our algorithms on the max-$k$-cut problem for $k\in\{2,3,4\}$. 
The Lagrangian dual bounds turn out to be stronger than the SDP bounds obtained from the continuous relaxations of MISDPs, already for lower levels in the hierarchy, however the bounds keep on improving for higher levels in the hierarchy. This work provides an alternative approach for computing bounds of combinatorial optimization problems via first order methods by exploiting both positive semidefiniteness and integrality.

 \medskip

 This paper is structured as follows.  Lagrangian duality theory for MISDPs is introduced in Section~\ref{section:Lagrangian duality Theory}.
A hierarchy of Lagrangian dual bounds for ISDPs is derived in Section~\ref{sect:Hierarchy of Lagrangian dual}.
In Section~\ref{Sect:SolvingLagDual} we present three approaches for solving the Lagrangian dual problem. In particular, in Section~\ref{Subsec:ProjectedSubgradient}
we present a projected-deflected  subgradient method,  in Section~\ref{Subsec:Nesterov} a projected-accelerated subgradient method, and in
Section~\ref{Subsec:Projectedbundle} a projected bundle method. In Section~\ref{Sect:MaxKCut} we present an ISDP formulation of the max-$k$-cut problem, after which numerical results on this problem are provided in Section~\ref{Subsec:MaxCutNumerics}. Section~\ref{sect:conclusion} concludes our work.

\subsection*{Notation}
For $n \in \mathbb{Z}_+$, we define the set $[n] := \{1, \ldots, n\}$.
We denote by $\mathbf{0}_n \in \mathbb{R}^n$  the vector of all zeros, and by $\mathbf{1}_n \in \mathbb{R}^n$ the vector of all  ones. 
The  matrix of all-ones of size $n\times n$ is denoted by $\mathbf{J}_n$.
We omit the subscripts of these matrices when there is no confusion about the size.
The Euclidean norm of vector $x$ is denoted by $||x||$.
For any matrix $X \in \mathbb{R}^{n \times n}$ and indicator set $S \subseteq [n]$, we let $X[S]$ denote the principal submatrix induced by $S$.

We denote the set of all $n \times n$ real symmetric matrices by  $\mathcal{S}^n$.
The cone of symmetric positive semidefinite matrices is defined as $\mathcal{S}^n_+ := \{ {X} \in \mathcal{S}^n \, : \, \, {X} \succeq \mathbf{0} \}$. Thus, ${X} \succeq \mathbf{0}$ means  that $X$ is a positive semidefinite  matrix. 
The trace of a square matrix ${X}=(X_{ij})$ is given by $\tr({X})=\sum_{i}X_{ii}$.
 For any ${X},{Y} \in \mathcal{S}^n$ the trace inner product is defined as $\langle {X}, {Y} \rangle := \tr({X} {Y}) = \sum_{i,j = 1}^n X_{ij} Y_{ij}$. The associated norm is the Frobenius norm, denoted by $|| X ||_F := \sqrt{\tr(X^\top X)}$.

 The operator $\diag : \mathbb{R}^{n \times n} \rightarrow \mathbb{R}^n$ maps a square matrix to a vector consisting of its diagonal elements. We denote by $\Diag : \mathbb{R}^n \rightarrow \mathbb{R}^{n \times n}$ its adjoint operator.

\section{Lagrangian duality theory for MISDP}
\label{section:Lagrangian duality Theory}

In this section we extend the Lagrangian duality theory from mixed-integer linear programming to the case of mixed-integer semidefinite programming. 
 The results presented in this section are similar to those from the MILP literature, see e.g.,~\cite{Geoffrion}. In particular, we establish weak duality and a sandwich theorem for the Lagrangian dual bounds,  among other results. 

Let $C \in \mathcal{S}^n$, $b \in \mathbb{R}^m$ and let $\mathcal{A}: \mathcal{S}^n \rightarrow \mathbb{R}^m$ be a linear operator that is defined by $\mathcal{A}(X)_i := \langle A_i, X \rangle$ with $A_i \in \mathcal{S}^n$ for all $i \in [m]$. We define $\mathcal{A}^* : \mathbb{R}^m \rightarrow \mathcal{S}^n$ to be its adjoint.
Moreover, we let $\mathcal{J} \subseteq [n] \times [n]$ be an index set of the integer variables in the program. For each $(i,j) \in \mathcal{J}$, the set $B_{ij} \subseteq \mathbb{Z}$ denotes the integer solution space of the variable indexed by $(i,j)$. Finally, for all $(i,j) \in \mathcal{J}$, let $l_{ij}\in \mathbb{Z}$ and $u_{ij} \in \mathbb{Z}$ denote the lower and upper bound, respectively, with respect to the set $B_{ij}$. We assume these lower and upper bounds to be finite, implying that the sets $B_{ij}$ are finite. We consider a MISDP problem in the following general form: 
\begin{align} \label{ISDP} \tag{$MISDP$}
&\begin{aligned}
z_{MISDP} := \quad \min \quad & \langle C, {X} \rangle  \\
\text{s.t.} \quad &  \mathcal{A}(X) = b,~ X \succeq \mathbf{0} \\
& X_{ij} \in B_{ij} \qquad \text{for all } (i,j) \in \mathcal{J}.  \end{aligned} 
\intertext{{Throughout this work, we presume that the problem~\eqref{ISDP} is feasible, while we briefly revisit the issue of feasibility at the end of this section, see Remark~\ref{Remark:Feasibility}.} By relaxing the integrality constraints, we obtain the continuous SDP relaxation of \eqref{ISDP}:}
\label{SDP} \tag{$SDP$}
&\begin{aligned}
\,\,\,\,\,\,\, z_{SDP} := \quad \min \quad & \langle C, {X} \rangle  \\
\text{s.t.} \quad &  \mathcal{A}(X) = b,~ X \succeq \mathbf{0} \\
& l_{ij} \leq X_{ij} \leq u_{ij} \qquad \text{for all } (i,j) \in \mathcal{J}.\end{aligned} 
\end{align}
The inequality $z_{SDP} \leq z_{MISDP}$ clearly holds. Throughout this section, we make the following assumption, which is natural for many MISDPs originating from discrete optimization~\cite{DeMeijerSotirov23}.
\begin{assumption} \label{As:bounded}
The feasible set of~\eqref{ISDP} is bounded.
\end{assumption}
Since all integer variables are bounded by $l_{ij}$ and $u_{ij}$, respectively, Assumption~\ref{As:bounded} implies that bounds on the continuous variables are enforced by the constraints of~\eqref{ISDP}. 
Observe that the boundedness assumption on~\eqref{ISDP} also implies that a solution to the continuous SDP is attained. 
Namely, if the feasible set of~\eqref{SDP} would be unbounded, then there exists a ray $R \in \mathcal{S}^n$ with $\mathcal{A}(R) = \bold{0}$ and $R \succeq \bold{0}$. Since all bounds on variables in $\mathcal{J}$ are finite, $R_{ij} = 0$ for all $(i,j) \in \mathcal{J}$. Therefore, $R$ would also be a ray of the feasible set of~\eqref{ISDP}, contradicting Assumption~\ref{As:bounded}.

The Lagrangian dual of~\eqref{ISDP} is obtained by dualizing  the constraints that are intractable in combination with the integrality constraints. In our setting, it is natural to dualize the constraint $X \succeq \mathbf{0}$. Moreover, we can distinguish between the equalities in $\mathcal{A}(X) = b$ that are tractable with the integrality constraints and the equalities that are not. Consequently, we split $\mathcal{A}(X) = b$ into $\mathcal{A}_1(X) = b_1$ and $\mathcal{A}_2(X) = b_2$ where $b_1 \in \mathbb{R}^{m_1}$ and $b_2 \in \mathbb{R}^{m_2}$ with $m_1 + m_2 = m$. Here we assume that the equalities $\mathcal{A}_1(X) = b_1$ are not tractable in combination with the integrality constraints. We define the Lagrangian $\mathcal{L}(\cdot)$ after dualizing the constraints $X \succeq \mathbf{0}$ and $\mathcal{A}_1(X) = b_1$ as
\begin{align*}
    \mathcal{L}(X, S, \lambda) := \langle C, X \rangle - \langle S, X \rangle + \lambda^\top \left( \mathcal{A}_1(X) - b_1 \right),
\end{align*}
where $S \succeq \mathbf{0}$ and $\lambda \in \mathbb{R}^{m_1}$ are the corresponding Lagrange multipliers.  Moreover, we let $P$ denote 
the set of mixed-integer  symmetric matrices induced by the remaining constraints, i.e.,
\begin{align} \label{def:setP} 
    P := \left\{ X \in \mathcal{S}^n \, : \, \, \mathcal{A}_2(X) = b_2,~ X_{ij} \in B_{ij}~\text{for all } (i,j) \in \mathcal{J}  \right \}.
\end{align}
Without loss of generality, we may assume that $P$ is bounded. Namely, if not, we can add to $\mathcal{A}_2(X) = b_2$ the variable bounds on the continuous variables (which exist due to Assumption~\ref{As:bounded}).
Let us define the Lagrangian dual function $g: \mathcal{S}^n_+ \times \mathbb{R}^{m_1} \rightarrow \mathbb{R}$ as follows: 
\begin{align} \label{def:dual_function}\tag{$LR(S,\lambda)$}
    g(S,\lambda) := \min \left\{ \mathcal{L}(X, S, \lambda) \, : \, \, X \in P \right\}. 
\end{align}
Obviously, for all $S \in \mathcal{S}^n_+, \lambda \in \mathbb{R}^{m_1}$ we have $g(S,\lambda) \leq \mathcal{L}(X^*,S, \lambda) \leq \langle C, X^* \rangle = z_{MISDP}$, where $X^*$ is an optimal solution to~\eqref{ISDP}. We call the minimization problem  \eqref{def:dual_function} the {\em Lagrangian relaxation of} \eqref{ISDP} {\em with parameters} $(S,\lambda)$. To obtain the best lower bound for $z_{MISDP}$, we take the supremum of $g(S,\lambda)$ with respect to the dual variables $S$ and $\lambda$. This leads to the \emph{Lagrangian dual} of \eqref{ISDP}: 
\begin{align} \label{Eq:LD} \tag{$LD$}
\begin{aligned}
z_{LD} := \quad \sup \quad & g(S,\lambda)   \\
\text{s.t.} \quad &  S \succeq \mathbf{0},~ \lambda \in \mathbb{R}^{m_1}. \end{aligned} 
\end{align}
The following result follows by construction. 

\begin{proposition}[Weak duality] \label{Thm:weakduality}
$z_{LD} \leq z_{MISDP}$
\end{proposition}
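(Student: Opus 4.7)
The plan is to chase the chain of inequalities that the authors have essentially sketched in the paragraph preceding the proposition, making each step precise.

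First I would fix an optimal solution $X^* \in \mathcal{S}^n$ of \eqref{ISDP}, which exists under Assumption~\ref{As:bounded} (it exists in the MISDP sense since the feasible set is closed and bounded; the existence argument the authors give for the SDP relaxation applies a fortiori). By feasibility, $X^*$ satisfies $\mathcal{A}_1(X^*) = b_1$, $\mathcal{A}_2(X^*) = b_2$, $X^* \succeq \mathbf{0}$, and $X^*_{ij} \in B_{ij}$ for all $(i,j) \in \mathcal{J}$. In particular $X^* \in P$, where $P$ is the set defined in~\eqref{def:setP}.

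Next, fix any dual-feasible pair $(S, \lambda)$ with $S \succeq \mathbf{0}$ and $\lambda \in \mathbb{R}^{m_1}$. Since $X^* \in P$, the definition of $g$ as a minimum over $P$ yields
\begin{align*}
    g(S,\lambda) \;\leq\; \mathcal{L}(X^*, S, \lambda) \;=\; \langle C, X^* \rangle - \langle S, X^* \rangle + \lambda^\top\bigl( \mathcal{A}_1(X^*) - b_1 \bigr).
\end{align*}
Using $\mathcal{A}_1(X^*) = b_1$ the third term vanishes, and since $S, X^* \in \mathcal{S}^n_+$ the trace inner product $\langle S, X^* \rangle$ is nonnegative (this is the standard self-duality of the PSD cone, the only nontrivial ingredient). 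Therefore $g(S,\lambda) \leq \langle C, X^* \rangle = z_{MISDP}$.

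Finally, taking the supremum of the left-hand side over all $(S,\lambda)$ with $S \succeq \mathbf{0}$ and $\lambda \in \mathbb{R}^{m_1}$ preserves the inequality, giving $z_{LD} \leq z_{MISDP}$. There is no real obstacle here; the only conceptual point worth flagging is that the weak-duality argument, unlike in the pure LP case, uses the self-duality of the PSD cone to discard the $-\langle S, X^* \rangle$ term rather than complementary-slackness on a sign constraint.
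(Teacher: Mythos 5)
Your proof is correct and is exactly the argument the paper gives (in compressed form) in the sentence preceding the proposition: for any optimal $X^*$ of \eqref{ISDP}, $X^* \in P$ gives $g(S,\lambda) \leq \mathcal{L}(X^*,S,\lambda)$, the dualized equalities make the $\lambda$-term vanish, self-duality of $\mathcal{S}^n_+$ discards $-\langle S, X^*\rangle$, and taking the supremum finishes. No differences worth noting.
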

An optimal solution of  the  Lagrangian relaxation 
\eqref{def:dual_function} may be an optimal solution to \eqref{ISDP}, as stated in the following proposition.
\begin{proposition}
Let $S \succeq \mathbf{0}$ and $\lambda \in \mathbb{R}^{m_1}$ be given. If $X^* \in {\mathcal S}^n$ is an optimal solution for \eqref{def:dual_function} that is feasible for \eqref{ISDP} and satisfies the complementarity slackness condition $SX^*=0$, then 
$X^*$ is optimal for \eqref{ISDP}.
\end{proposition}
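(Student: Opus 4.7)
The plan is to chain together three facts so that the objective value of $X^*$ in \eqref{ISDP} coincides with the Lagrangian dual value, which by weak duality lower-bounds $z_{MISDP}$; feasibility of $X^*$ then pins the equality from above, forcing optimality.

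First I would evaluate the Lagrangian at $X^*$ using the two hypotheses. Feasibility of $X^*$ for \eqref{ISDP} gives $\mathcal{A}_1(X^*) = b_1$, so the multiplier term $\lambda^\top(\mathcal{A}_1(X^*)-b_1)$ vanishes. The complementary slackness condition $SX^* = \mathbf{0}$ yields $\langle S, X^* \rangle = \tr(SX^*) = 0$. Substituting into the definition of $\mathcal{L}$ leaves
\begin{align*}
\mathcal{L}(X^*, S, \lambda) \;=\; \langle C, X^* \rangle.
\end{align*}

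Next, since $X^*$ is by assumption an optimal solution of \eqref{def:dual_function}, we have $g(S,\lambda) = \mathcal{L}(X^*, S, \lambda) = \langle C, X^* \rangle$. Applying weak duality (Proposition~\ref{Thm:weakduality}), $g(S,\lambda) \leq z_{LD} \leq z_{MISDP}$, so $\langle C, X^* \rangle \leq z_{MISDP}$. On the other hand, $X^*$ is feasible for \eqref{ISDP}, which gives the reverse inequality $\langle C, X^* \rangle \geq z_{MISDP}$. Hence equality holds and $X^*$ attains the optimum of \eqref{ISDP}.

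There is essentially no obstacle here: the argument is a direct semidefinite analogue of the classical KKT-style sufficiency argument in Lagrangian duality. The only step worth a moment of care is the identity $\tr(SX^*) = 0$ from $SX^* = \mathbf{0}$, which is immediate from the definition of the trace, and the implicit use that $X^*$ lies in $P$ (so that it is admissible in \eqref{def:dual_function} and satisfies $\mathcal{A}_2(X^*) = b_2$ automatically), which follows from its feasibility for \eqref{ISDP}.
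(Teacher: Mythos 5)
Your argument is correct and is exactly the standard sufficiency argument the paper intends (the paper states this proposition without proof, as it follows directly from the definition of $g$ and weak duality): feasibility kills the $\lambda$-term, complementary slackness kills the $\langle S, X^*\rangle$-term, so $g(S,\lambda) = \langle C, X^*\rangle \leq z_{MISDP}$, while feasibility of $X^*$ gives the reverse inequality. No gaps.
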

In the sequel, we show that it is possible to obtain $z_{LD}$ as the solution of a continuous semidefinite programming problem. We consider
\begin{align} \label{EquivalentLD}
\begin{aligned}
\min \quad & \langle C , X \rangle   \\
\text{s.t.} \quad & \mathcal{A}_1(X) = b_1, ~ X \succeq \mathbf{0} \\
&  X \in \Conv\left( P \right). \\
\end{aligned} 
\end{align}
Observe that $P$ is defined as the set of mixed-integer points contained in a polyhedron.
Since the feasible set of~\eqref{EquivalentLD} is contained in the feasible set of~\eqref{SDP} and the latter one is compact, it follows that an optimal solution to~\eqref{EquivalentLD} is attained.
We show below that the optimization problem \eqref{EquivalentLD} is equivalent to the Lagrangian dual of \eqref{ISDP} based on a similar result for MILP by Geoffrion~\cite{Geoffrion}.
\begin{theorem} \label{Thm:AlternativeLD} 
Suppose Assumption~\ref{As:bounded} holds and let $\hat{z}$ denote the optimal objective value to~\eqref{EquivalentLD}, provided that it exists. Then $z_{LD} = \hat{z}$.
\end{theorem}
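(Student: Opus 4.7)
The plan is to adapt Geoffrion's classical MILP argument to the semidefinite setting by exchanging the outer supremum and inner minimum in \eqref{Eq:LD} via a minimax theorem, and then identifying the resulting inner supremum with the feasibility constraints defining \eqref{EquivalentLD}.

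First, I would observe that $P$ is compact: it is a finite union of closed affine slices (one per feasible assignment of the integer entries $(X_{ij})_{(i,j)\in\mathcal{J}}$), and it is bounded thanks to the comment following Assumption~\ref{As:bounded} on adjoining continuous-variable bounds to $\mathcal{A}_2$. Hence $\Conv(P)$ is also compact in the finite-dimensional space $\mathcal{S}^n$. Since $\mathcal{L}(X, S, \lambda)$ is linear in $X$, the minimum of a linear function over $\Conv(P)$ is attained at an extreme point which belongs to $P$, so
\[
g(S,\lambda) \;=\; \min_{X \in P} \mathcal{L}(X, S, \lambda) \;=\; \min_{X \in \Conv(P)} \mathcal{L}(X, S, \lambda).
\]

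Second, I would apply Sion's minimax theorem to $\mathcal{L}$ on the product $\Conv(P) \times (\mathcal{S}^n_+ \times \mathbb{R}^{m_1})$: the function is jointly continuous and affine---hence simultaneously convex and concave---in each of the two argument blocks. Compactness of $\Conv(P)$ suffices for the hypotheses of Sion's theorem even though the dual domain is unbounded, yielding
\[
z_{LD} \;=\; \sup_{S \succeq \mathbf{0},\, \lambda \in \mathbb{R}^{m_1}} \min_{X \in \Conv(P)} \mathcal{L}(X, S, \lambda) \;=\; \min_{X \in \Conv(P)} \sup_{S \succeq \mathbf{0},\, \lambda \in \mathbb{R}^{m_1}} \mathcal{L}(X, S, \lambda).
\]

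Third, I would evaluate the inner supremum pointwise in $X$. The supremum over $\lambda \in \mathbb{R}^{m_1}$ of $\lambda^\top(\mathcal{A}_1(X) - b_1)$ equals $0$ if $\mathcal{A}_1(X) = b_1$ and $+\infty$ otherwise. Similarly, the supremum of $-\langle S, X \rangle$ over $S \succeq \mathbf{0}$ equals $0$, attained at $S = \mathbf{0}$, when $X \succeq \mathbf{0}$, and equals $+\infty$ otherwise: any vector $v$ with $v^\top X v < 0$ gives $-\langle t v v^\top, X \rangle \to +\infty$ as $t \to \infty$. Hence the inner supremum equals $\langle C, X \rangle$ on the feasible set of \eqref{EquivalentLD} and $+\infty$ elsewhere, so the outer minimum reduces to $\hat{z}$, completing the identification $z_{LD} = \hat{z}$.

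The principal obstacle is justifying the minimax exchange rigorously. The dual set $\mathcal{S}^n_+ \times \mathbb{R}^{m_1}$ is not compact and $\mathcal{L}$ does not coerce in $(S,\lambda)$, so the classical von Neumann minimax theorem does not directly apply; compactness of $\Conv(P)$, secured via Assumption~\ref{As:bounded}, is the linchpin that makes Sion's theorem available. An alternative route closer to Geoffrion's original proof would describe the polytope $\Conv(P)$ by linear inequalities and invoke LP duality for each fixed $(S,\lambda)$, but this requires more bookkeeping and the minimax route is cleaner in the SDP setting.
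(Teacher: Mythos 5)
Your proposal is correct and follows essentially the same route as the paper's proof: replace $P$ by $\Conv(P)$ using linearity of $\mathcal{L}$ in $X$, invoke Sion's minimax theorem via compactness of $\Conv(P)$ (secured by Assumption~\ref{As:bounded}), and then evaluate the inner supremum to recover the constraints $\mathcal{A}_1(X)=b_1$ and $X \succeq \mathbf{0}$. Your additional details (the explicit $t\,vv^\top$ argument for unboundedness when $X \nsucceq \mathbf{0}$ and the justification of compactness of $P$) only make explicit what the paper leaves implicit.
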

\begin{proof}
Since $\mathcal{L}(\cdot, S, \lambda)$ is linear on $\mathcal{S}^n$ for all fixed $S \in \mathcal{S}^n_+$ and $\lambda \in \mathbb{R}^{m_1}$, we have
\begin{align}
    z_{LD} & = \sup_{S \succeq \mathbf{0}, \lambda} \min_{X} \left\{ \langle C, X \rangle - \langle S, X \rangle + \lambda^\top \left( \mathcal{A}_1(X) - b_1 \right) \, : \, \, X \in P\right\} \nonumber \\ 
    & = \sup_{S \succeq \mathbf{0}, \lambda} \min_{X} \left\{ \langle C, X \rangle - \langle S, X \rangle + \lambda^\top \left( \mathcal{A}_1(X) - b_1 \right) \, : \, \, X \in \Conv (P) \right\}.  \label{eq:sion1} 
\intertext{Since $\mathcal{L}(X, \cdot, \cdot)$ is also linear on $\mathcal{S}^n_+ \times \mathbb{R}^{m_1}$ for all fixed $X \in \Conv(P)$, and $\Conv(P)$ is compact, Sion's minimax theorem~\cite{Sion} implies that we may interchange the order of taking the minimum and the supremum, yielding
    }
  z_{LD}  & = \min_{X} \sup_{S \succeq \mathbf{0}, \lambda} \left\{ \langle C, X \rangle - \langle S, X \rangle + \lambda^\top \left( \mathcal{A}_1(X) - b_1 \right) \, : \, \, X \in \Conv (P) \right\} \nonumber \\
     & = \min_{X}  \left\{ \langle C, X \rangle \, : \, \, X \in \Conv (P),~ \mathcal{A}_1(X) = b_1,~X \succeq \mathbf{0} \right\}, \nonumber
\end{align}
where the final equality follows from the fact that if $\mathcal{A}_1(X) \neq b_1$ or $X \nsucceq \mathbf{0}$, then the inner supremum is unbounded.
\end{proof}

The combination of the results from \Cref{Thm:weakduality} and~\Cref{Thm:AlternativeLD} leads to the following sandwich relation.
\begin{corollary}[Sandwich theorem] \label{Cor:sandwich}
Suppose~\eqref{ISDP} is feasible  and Assumption~\ref{As:bounded} holds. Then, $z_{SDP} \leq z_{LD} \leq z_{MISDP}$.
\end{corollary}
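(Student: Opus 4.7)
The plan is to combine the two results preceding the corollary. The right-hand inequality $z_{LD} \le z_{MISDP}$ is exactly Proposition~\ref{Thm:weakduality} (weak duality), so no additional work is needed there. For the left-hand inequality $z_{SDP} \le z_{LD}$, I would invoke Theorem~\ref{Thm:AlternativeLD} to replace $z_{LD}$ by $\hat z$, the optimal value of the continuous reformulation \eqref{EquivalentLD}, and then argue that every feasible point of \eqref{EquivalentLD} is feasible for \eqref{SDP}, from which $z_{SDP}\le \hat z = z_{LD}$ follows immediately.

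The only nonroutine step is verifying this feasibility inclusion. Given any $X$ feasible for \eqref{EquivalentLD}, we have $\mathcal{A}_1(X)=b_1$ and $X\succeq \mathbf{0}$ directly, so what remains is to check $\mathcal{A}_2(X)=b_2$ and $l_{ij}\le X_{ij}\le u_{ij}$ for $(i,j)\in\mathcal{J}$. Here I would use the boundedness of $P$ (noted just after its definition in~\eqref{def:setP}) together with Carath\'eodory's theorem to write $X=\sum_{k} \alpha_k X^{(k)}$ as a finite convex combination of points $X^{(k)}\in P$, with $\alpha_k\ge 0$ and $\sum_k \alpha_k = 1$. Linearity of $\mathcal{A}_2$ then yields $\mathcal{A}_2(X)=\sum_k\alpha_k b_2=b_2$, and for each $(i,j)\in\mathcal{J}$ the entry $X_{ij}=\sum_k\alpha_k X^{(k)}_{ij}$ is a convex combination of integers drawn from $B_{ij}\subseteq[l_{ij},u_{ij}]$, hence lies in $[l_{ij},u_{ij}]$.

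Putting these together, any $X$ feasible for \eqref{EquivalentLD} is feasible for \eqref{SDP}, so $z_{SDP}\le \langle C,X\rangle$ for every such $X$. Taking the infimum and applying Theorem~\ref{Thm:AlternativeLD} gives $z_{SDP}\le \hat z = z_{LD}$, and chaining with Proposition~\ref{Thm:weakduality} completes the sandwich $z_{SDP}\le z_{LD}\le z_{MISDP}$. The main (minor) obstacle is really just the bookkeeping that $\mathrm{Conv}(P)$ sits inside the box relaxation of $P$; everything else is a direct invocation of earlier results, and Assumption~\ref{As:bounded} together with the compactness remark following it ensures that the optima on which the argument relies are actually attained.
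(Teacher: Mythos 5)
Your proposal is correct and follows the same route the paper takes: the right inequality is Proposition~\ref{Thm:weakduality}, and the left follows from Theorem~\ref{Thm:AlternativeLD} together with the containment of the feasible set of~\eqref{EquivalentLD} in that of~\eqref{SDP}, which the paper asserts without the convex-combination bookkeeping you supply (and which is correct; Carath\'eodory is not even needed, since any point of $\Conv(P)$ is by definition a finite convex combination of points of $P$).
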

{\begin{remark}  \label{Remark:Slater}
Instead of Assumption~\ref{As:bounded}, we can also establish the sandwich theorem of Corollary~\ref{Cor:sandwich} by assuming that the problem~\eqref{EquivalentLD} has a Slater feasible point and all data matrices are rational (while allowing $P$ to be unbounded).
In that case, $P$ is the intersection of a mixed-integer linear set and a rational polyhedron, hence by Meyer's theorem~\cite{Meyer} $\Conv(P)$ is a polyhedron. Let $\mathcal{A}_3 : \mathcal{S}^n \rightarrow \mathbb{R}^{m_3}$ and $b_3 \in \mathbb{R}^{m_3}$ be the linear operator and vector such that  $\Conv(P) = \{X \in \mathcal{S}^n \, : \,\, \mathcal{A}_3(X) \leq b_3\}$, respectively.
Then,~\eqref{eq:sion1} still holds and can be rewritten to
\begin{align*}
z_{LD} = \sup_{S \succeq \mathbf{0}, \lambda}  -b_1^\top \lambda + \min_{X\in \mathcal{S}^n} \left\{ \langle C - S + \mathcal{A}_1^*(\lambda), X \rangle  \, : \, \,  \mathcal{A}_3(X) \leq b_3 \right\}.
\end{align*}
Observe that the inner minimization problem is linear, so we replace it by its dual. Let $z \geq \mathbf{0}$ denote the dual variable corresponding to $\mathcal{A}_3(X) \leq b_3$, then:
\begin{align}
 & = \sup_{S \succeq \mathbf{0}, \lambda}  -b_1^\top \lambda + \sup_{z \geq \mathbf{0}} \left\{ - b_3^\top z \, : \, \, S = C  + \mathcal{A}_1^*(\lambda) + \mathcal{A}_3^*(z) \right\}  \nonumber\\
 & = \sup_{S \succeq \mathbf{0}, z \geq \mathbf{0}, \lambda} \left\{ -b_1^\top \lambda  - b_3^\top z \, : \, \, S = C  + \mathcal{A}_1^*(\lambda) + \mathcal{A}_3^*(z) \right\}  \nonumber \\
 & = \min_X  \left\{ \langle C, X \rangle \, : \, \, X \in \Conv (P),~ \mathcal{A}_1(X) = b_1,~X \succeq \mathbf{0} \right\}, \nonumber
\end{align}
 where the last equality follows from the latter problem having a Slater feasible point, see e.g., \cite{Ramana1997StrongDF}. Also, observe that the latter minimization problem is attained, due to its feasible set being non-empty and compact. The sequence $z_{SDP} \leq z_{LD} \leq z_{MISDP}$ immediately follows. As boundedness is a more natural property in the MISDPs that are considered in later sections, we stick to Assumption~\ref{As:bounded} in the sequel. \hfill $\qedsymbol$
\end{remark}

 \begin{remark} 
 In the light of Remark~\ref{Remark:Slater}, we cannot relax both Slater feasibility of~\eqref{EquivalentLD} and boundedness of $P$ in order to maintain the result of Corollary~\ref{Cor:sandwich}. For example, consider
    \begin{align*}
        z_{MISDP} = \min \quad & \left\langle \begin{pmatrix}
            -1 & -1 \\
            -1 & 0
        \end{pmatrix}, \begin{pmatrix}
            x_1 & x_2 \\
            x_2 & x_3
        \end{pmatrix} \right \rangle \\
        \text{s.t.} \quad &   \begin{pmatrix}
            x_1 & x_2 \\
            x_2 & x_3
        \end{pmatrix}\succeq 0,~~  x_1 = 0,~~x_2, x_3 \in \mathbb{Z}.
    \end{align*}
    It is clear that $z_{MISDP} = z_{SDP} = 0$. Let  $\mathcal{A}_2(\cdot) = 0$ represent the constraint $x_1 = 0$. Then the set $P$ is unbounded and $\Conv(P)$ does not have a Slater feasible point. The Lagrangian dual function $g(S)$ becomes
    \begin{align*}
        g(S) = \min_{x_2, x_3 {\in \mathbb{Z}}} \left \langle \begin{pmatrix}
            -1 - s_1 & -1 -s_2 \\
            -1 - s_2 & - s_3
        \end{pmatrix}, \begin{pmatrix}
            0 & x_2 \\
            x_2 & x_3
        \end{pmatrix}\right \rangle. 
    \end{align*}
    Since the system $S \succeq \mathbf{0}$, $s_3 = 0$ and $s_2 = -1$ is infeasible, we have $g(S) = -\infty$ for all $S \succeq \mathbf{0}$. Thus, $z_{LD} = -\infty$. \hfill $\qedsymbol$
\end{remark}
}
In the sequel, we briefly describe conditions under which equality throughout the sequence $z_{SDP} \leq z_{LD} \leq z_{MISDP}$ holds. Let $\mathcal{F}_{MISDP}, \mathcal{F}_{SDP}$ and $\mathcal{F}_{LD}$ denote the feasible sets of~\eqref{ISDP}, \eqref{SDP} and \eqref{EquivalentLD}, respectively. Moreover, for any convex set $K \subseteq \mathcal{S}^n$, its normal cone at $X \in K$ is defined as
\begin{align*}
    \mathcal{N}_K(X) := \left\{Z \in \mathcal{S}^n \, : \, \, \langle Z, X \rangle \geq \langle Z, Y \rangle \text{ for all } Y \in K \right\}. 
\end{align*}

\begin{theorem} \label{Thm:StrongDual}
Let $\chi^*_{MISDP}$ and $\chi^*_{LD}$ denote the set of optimizers to \eqref{ISDP} and \eqref{EquivalentLD}, respectively. Then, under Assumption~\ref{As:bounded},
\begin{enumerate}
    \item[(i)] $z_{LD} = z_{MISDP}$ if and only if $-C \in \mathcal{N}_{\mathcal{F}_{LD}}(X^*)$ for all $X^* \in \chi^*_{MISDP}$;
    \item[(ii)] $z_{SDP} = z_{LD}$ if and only if $-C \in \mathcal{N}_{\mathcal{F}_{SDP}}(X^*)$ for all $X^* \in \chi^*_{LD}$.
\end{enumerate}
\end{theorem}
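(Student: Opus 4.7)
The plan is to read both statements as instances of the following general fact: when minimizing a linear objective over a nested pair of convex sets, equality of the two optimal values is equivalent to an optimizer of the inner problem being optimal for the outer one, and this latter optimality is captured exactly by $-C$ lying in the normal cone at that point. The main ingredient I would use is Theorem~\ref{Thm:AlternativeLD}, which tells us that $z_{LD} = \min\{\langle C, X \rangle : X \in \mathcal{F}_{LD}\}$ with the minimum attained, so all three quantities $z_{SDP}$, $z_{LD}$, $z_{MISDP}$ become minima of the same linear functional over different feasible sets.

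First I would verify the nesting $\mathcal{F}_{MISDP} \subseteq \mathcal{F}_{LD} \subseteq \mathcal{F}_{SDP}$. The left inclusion holds because any feasible point of~\eqref{ISDP} lies in $P$ (hence in $\Conv(P)$) and also satisfies $\mathcal{A}_1(X) = b_1$ and $X \succeq \mathbf{0}$. The right inclusion follows because $\Conv(P)$ inherits from $P$ the affine equalities $\mathcal{A}_2(X) = b_2$ (linearity of $\mathcal{A}_2$) together with the box constraints $l_{ij} \leq X_{ij} \leq u_{ij}$ for $(i,j)\in\mathcal{J}$. Under Assumption~\ref{As:bounded}, $\mathcal{F}_{SDP}$ is compact, and hence so are $\mathcal{F}_{LD}$ and $\mathcal{F}_{MISDP}$, so all three minima are attained. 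The key routine observation I would invoke is that, by definition of the normal cone, $-C \in \mathcal{N}_K(X^*)$ for $X^* \in K$ translates precisely into $\langle C, X^* \rangle \leq \langle C, Y \rangle$ for every $Y \in K$, i.e., $X^*$ minimizes $\langle C, \cdot \rangle$ over $K$.

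For part~(i), I would pick any $X^* \in \chi^*_{MISDP}$ and observe $X^* \in \mathcal{F}_{LD}$. The condition $-C \in \mathcal{N}_{\mathcal{F}_{LD}}(X^*)$ is then equivalent to $\langle C, X^* \rangle = z_{LD}$, which combined with $\langle C, X^* \rangle = z_{MISDP}$ amounts exactly to $z_{LD} = z_{MISDP}$. Since every member of $\chi^*_{MISDP}$ achieves the objective value $z_{MISDP}$, the universal quantifier over $\chi^*_{MISDP}$ is automatic once the numerical equality holds. Part~(ii) I would handle analogously: for any $X^* \in \chi^*_{LD} \subseteq \mathcal{F}_{SDP}$, the condition $-C \in \mathcal{N}_{\mathcal{F}_{SDP}}(X^*)$ is equivalent to $\langle C, X^* \rangle = z_{SDP}$, which is equivalent to $z_{LD} = z_{SDP}$.

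I do not expect any genuine obstacle; both statements reduce to the near-tautology that an inner optimizer attains the outer minimum if and only if the two minima coincide. The only items requiring care are the nesting of the three feasible sets and the attainment of the three minima, both of which follow directly from Assumption~\ref{As:bounded} and unpacking the definitions of $P$ and $\Conv(P)$.
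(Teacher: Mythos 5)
Your proposal is correct and follows essentially the same route as the paper: both reduce each equivalence to the observation that an optimizer of the inner problem lies in the larger feasible set, so equality of the two optimal values holds if and only if that point is also optimal for the outer problem, which is exactly the normal-cone condition. Your additional verification of the nesting $\mathcal{F}_{MISDP} \subseteq \mathcal{F}_{LD} \subseteq \mathcal{F}_{SDP}$ and of attainment is a slightly more careful rendering of steps the paper leaves implicit.
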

\begin{proof}
$(i)$ Let $X^* \in \chi^*_{MISDP}$. Since $X^* \in \mathcal{F}_{LD}$, $z_{MISDP} = z_{LD}$ if and only if $X^*$ is also an optimizer to~\eqref{EquivalentLD}. The latter holds if and only if $X^*$ is such that $\langle C, X^*\rangle \leq \langle C, Y\rangle$ for all $Y \in \mathcal{F}_{LD}$, which is equivalent to $-C \in \mathcal{N}_{\mathcal{F}_{LD}}(X^*)$.

$(ii)$ The proof of the second statement is very similar, replacing $\chi^*_{MISDP}$ by $\chi^*_{LD}$ and $\mathcal{F}_{LD}$ by $\mathcal{F}_{SDP}$. 
\end{proof}
It follows from \Cref{Thm:StrongDual} that a sufficient condition for $z_{LD} = z_{MISDP}$ when \eqref{ISDP} is bounded is that 
\begin{align*}
    \Conv \left(P \cap \left \{X \in \mathcal{S}^n \, : \, \, \mathcal{A}_1(X) = b_1,~X \succeq \mathbf{0} \right \} \right) = \Conv(P) \cap \left \{X \in \mathcal{S}^n \, : \, \, \mathcal{A}_1(X) = b_1,~X \succeq \mathbf{0} \right \},
\end{align*}
whereas a sufficient condition for $z_{SDP} = z_{LD}$ when \eqref{ISDP} is bounded is
\begin{align*}
    \Conv(P)  = \left \{ X \in \mathcal{S}^n \, : \,\, \mathcal{A}_2(X) = b_2,~ l_{ij} \leq X_{ij} \leq u_{ij} \text{ for all } (i,j) \in \mathcal{J} \right \}. 
\end{align*}

\begin{remark}  \label{Remark:Feasibility} 
    In all of the results in this section, we assumed that~\eqref{ISDP} is feasible. However, when exploiting the Lagrangian dual bounds in a branching framework, there is a need to recognize infeasible subproblems. More precisely, we are interested in obtaining conditions for the Lagrangian dual problem that certify infeasibility of subproblems of the form~\eqref{ISDP}.  

    The problem~\eqref{ISDP} is infeasible if the intersection of the set $P$ and $\{X \, : \, \, \mathcal{A}_1(X) = b_1,~X \succeq 0\}$ is empty. This can happen due to multiple reasons. First, if $P$ is an empty set, the Lagrangian dual function $g(S,\lambda)$ is not defined for any dual point $(S, \lambda)$. This immediately certifies infeasibility of~\eqref{ISDP}, resulting in fathoming the subproblem in a branching framework. If $P$ is non-empty, $g(S,\lambda)$ exists and is finite for all $(S,\lambda)$ where $S \succeq 0$. Therefore, the Lagrangian dual problem~\eqref{Eq:LD} is either feasible or unbounded. 

    Let us first consider the case where~\eqref{Eq:LD} is unbounded. This implies there exists a direction $(S^*, \lambda^*) \in \mathcal{S}^n_+ \times \mathbb{R}^{m_1}$ in which the Lagrangian dual function $g$ keeps on improving. Exploiting the definition of $g$, this happens if and only if there exists a point $X^* \in P$ such that 
    \begin{align} \label{UnboundedConditions}
        S^* - \mathcal{A}_1^*(\lambda^*) \in \mathcal{N}_{\Conv(P)}(X^*) \quad \text{and} \quad \langle \lambda^*, \mathcal{A}_1(X^*) - b_1 \rangle - \langle S, X^* \rangle > 0,
    \end{align} 
    where $\mathcal{N}_{\Conv(P)}(X^*)$ denotes the normal cone of $\Conv(P)$ at the point $X^*$. To verify the validity of these conditions, let $(S^0, \lambda^0)$ be a dual pair for which $X^* \in \text{argmin}_{X \in P} \mathcal{L}(X, S^0, \lambda^0)$. The first condition in~\eqref{UnboundedConditions} then implies that by adding a positive multiple of $(S^*, \lambda^*)$ to $(S^0, \lambda^0)$, the point $X^*$ remains a minimizer of the Lagrangian function at the new point. If the second condition is also satisfied, the value of the Lagrangian function at $X^*$ strictly increases by adding a positive multiple of $(S^*, \lambda^*)$ to $(S^0, \lambda^0)$. Hence, $(S^*, \lambda^*)$ is a ray in the dual feasible set on which the dual function $g$ linearly improves. 

    Finally, when~\eqref{Eq:LD} is feasible, the Lagrangian dual bound $z_{LD}$ is finite, which is exploited in the branching framework. In case the underlying subproblem~\eqref{ISDP} is infeasible, the subproblem is either fathomed since its bound is overruled by an incumbent solution or further branching leads to a detection of infeasibility due to one of the cases described above.
\end{remark}

\section{Hierarchy of Lagrangian dual bounds for ISDP}
\label{sect:Hierarchy of Lagrangian dual}

In this section, we extend on the general theory of Section~\ref{section:Lagrangian duality Theory} and introduce a hierarchy of  Lagrangian dual bounds for purely integer SDPs, i.e., problems of the form \eqref{ISDP} with $\mathcal{J} = [n]\times [n]$.

Without loss of generality, we assume that $B_{ij}$ is of the same form $B \subseteq \mathbb{Z}$ for all $(i,j) \in [n] \times [n]$.
This assumption is non-restrictive, as tighter bounds for certain entries in $X$ can be enforced by the linear constraints.  Let $l$ and $u$ denote the smallest and largest value of $B$, respectively. Hence, we consider problems of the following form:
\begin{align} \label{pureISDP} \tag{$ISDP$}
&\begin{aligned}
z_{ISDP} := \quad \min \quad & \langle C, {X} \rangle  \\
\text{s.t.} \quad &  \mathcal{A}(X) = b,~ X \succeq \mathbf{0}, \quad X \in B^{n \times n}. \end{aligned} 
\end{align}
A main challenge in the construction of the Lagrangian dual problem is to decide on the splitting between $\mathcal{A}_1(X) = b_1$ and $\mathcal{A}_2(x) = b_2$. One choice, which we consider here, is to dualize all of them, i.e., $\mathcal{A}_1(X) = b_1$ captures all linear constraints. To prevent the resulting bound to be equivalent to the continuous SDP relaxation, there is another property of the optimization problem that we can exploit.
In many problems that we consider, i.e., the ones resulting from discrete optimization problems, the feasible matrix variables do not only have integer entries, they also have an upper bound on their rank. This upper bound results from the combinatorial nature of the problem setting. To exploit it, we define for a given integer $1 \leq r \leq n$ the set
\begin{align*}
    \mathcal{S}^n_+(B, r)  := \mathcal{S}^n_+ \cap B^{n \times n} \cap \{X ~:~\rank(X) \leq r\} 
\end{align*}
as the set of PSD matrices having entries in $B$ with an upper bound on its rank. 
We also define $\mathcal{S}^n_+(B) := \mathcal{S}^n_+(B,n)$ as the set of integer PSD matrices without the rank-assumption. Since $B$ is bounded,
the set $\mathcal{S}^n_+(B,r)$ is finite for all $r \leq n$.

For several well-known integer sets $B$, simple descriptions of $\mathcal{S}^n_+(B,r)$ or $\mathcal{S}^n_+(B)$  have been derived, e.g., in the form of a finite generating set of integer rank-1 PSD matrices. Table~\ref{Tab:IntegerPSDSets} lists several of such sets, including an explicit description of its elements and cardinality, see~\cite{BermanXu, LetchfordSorensen,DeMeijerSotirov23}. More results on the structural properties of PSD matrices having integer entries are derived by De Loera et al.~\cite{Loera23}.

\renewcommand\theadfont{\normalsize}
\begin{table}[h]
\centering
\begin{tabular}{@{}lp{0.6\linewidth}l@{}}
\toprule
\thead{Set $\mathcal{D}^n$}                     & \thead{Description} & $|\mathcal{D}^n|$ \\ \midrule
$\mathcal{S}^n_+(\{0,1\})$              
& $X=\sum_{i} x_ix_i^\top$, where $x_i \in \{0,1\}^n$ and each $x_i$ and $x_j$ with $i \neq j$ have non-overlapping support                   
&     $B_{n+1}$         
\\[1.8em]
$\mathcal{S}^n_+(\{0,1\}, 1)$ 
& $X=xx^\top$, where $x \in \{0,1\}^n$          
&     $2^n$              
\\[0.8em]
$\mathcal{S}^n_+(\{0,1\}, r)$ 
& $X=\sum_{i=1}^r x_ix_i^\top$, where $x_i \in \{0,1\}^n$ and each $x_i$ and $x_j$ with $i \neq j$ have non-overlapping support                   
&  $\sum_{k=0}^{r+1} \stirlingii{n+1}{k}$                      
\\[1.8em]
$\mathcal{S}^n_+(\{0, \pm 1\})$          
&  $X=\sum_{i} x_ix_i^\top$, where $x_i \in \{0,\pm1\}^n$ and each $x_i$ and $x_j$ with $i \neq j$ have non-overlapping support                       
&  $D_n$            
\\[1.8em]
$\mathcal{S}^n_+(\{0, \pm 1\}, 1)$          
& $X=xx^\top$, where $x \in \{0,\pm1\}^n$                  
&  $1 + \sum_{k=1}^n \binom{n}{k} 2^{k-1}$  
\\[0.8em]
$\mathcal{S}^n_+(\{0, \pm 1\}, r)$          
& $X=\sum_{i=1}^r x_ix_i^\top$, where $x_i \in \{0,\pm1\}^n$ and each $x_i$ and $x_j$ with $i \neq j$ have non-overlapping support                      
&  $\sum_{k = 0}^n S_B(n,k)$                
\\[1.8em]
$\mathcal{S}^n_+(\{\pm 1\})$            
& $X= xx^\top$, where $x \in \{\pm1\}^n$                 
&    $2^{n-1}$                           
\\ \bottomrule
\end{tabular}
\caption{Descriptions and cardinalities of the sets $\mathcal{S}_+^n(B,r)$ and $\mathcal{S}_+^n(B)$ for different $B$ and $r$. In the third column, $B_n$ denotes the $n$th Bell number, $\stirlingii{n}{k}$ denotes the Stirling number of the second kind, $D_n$ denotes the $n$th Dowling number and $S_B(n,k)$ denotes the $B$-type Stirling number of the second kind. See Appendix~\ref{Appendix:SetPartitions} for details on these numbers and their relation to integer PSD matrices.
\label{Tab:IntegerPSDSets}}
\end{table} 
In the sequel, we let $\mathcal{D}^n = \mathcal{S}^n_+(B,r)$,
where the rank constraint is implied by the constraints $\mathcal{A}(X) = b$ in combination with $X \in B^{n\times n}$ and $X \succeq \bold{0}$.  In other words, $\mathcal{D}^n$ contains the feasible set of~\eqref{pureISDP}.
In this more specified setting, the problem~\eqref{pureISDP} reduces to the following problem:
\begin{align} \label{ISDP_reform} 
&\begin{aligned}
\min \quad & \langle C, {X} \rangle  \\
\text{s.t.} \quad &  \mathcal{A}(X) = b,~X \succeq \bold{0}, ~
 X \in \mathcal{D}^n.  \end{aligned} 
\end{align}
 Although the exact structure of $\mathcal{D}^n$ depends on the problem, it can for example be one of the sets described in Table~\ref{Tab:IntegerPSDSets}. For instance, if $B = \{0,1\}$ and there is no restriction on a rank,
we have $\mathcal{D}^n = \mathcal{S}^n_+(\{0,1\})$.
If we dualize $\mathcal{A}(X) = b$ and $X \succeq \bold{0}$, what remains is $P = \{0,1\}^{n \times n}$. Since $\Conv(P) = [0,1]^{n \times n}$, Theorem~\ref{Thm:AlternativeLD} implies that $z_{LD}$ will be equal to $z_{SDP}$. In what follows, we discuss a general approach that can be applied to strengthen the resulting Lagrangian dual bound.

\medskip 

The set $P$ can be further tightened by keeping (a part of) the constraint $X \succeq \bold{0}$ in the set $P$. If we require that all elements in $P$ must be PSD, the set $P$ becomes $\mathcal{D}^n$, which is intractable to optimize over. However, for relatively small values of $m$, it is possible to optimize over $\mathcal{D}^m$, e.g., by a complete enumeration. Therefore, instead of letting $P$ be the entire set $\mathcal{D}^n$, we can impose a condition on certain submatrices to be $m \times m$ matrices of the form $\mathcal{D}^m$. To that end, assume that $\mathcal{K}^p=\{K_1, \ldots , K_N\}$ denotes a packing on the set $[n]$ where $m_\ell := |K_\ell|$ for  $K_\ell \in \mathcal{K}^p$ is such that $m_\ell \leq p$ for all $K_\ell \in \mathcal{K}^p$, for some given positive integer $p$. We consider the following ISDP that is  equivalent to \eqref{ISDP_reform}:
\begin{align} \label{Def:LDforBSDP}
   \begin{aligned} \min \quad & \langle C, X \rangle \\
    \text{s.t.} \quad & \mathcal{A}(X) = b,~X \succeq \mathbf{0} \\
     & X[K_{\ell}] \in {\mathcal{D}^{m_\ell} \qquad \forall K_\ell \in \mathcal{K}^p} \\
     & X \in B^{n \times n}. \end{aligned}
\end{align} 
Indeed, the equivalence between \eqref{ISDP_reform} and~\eqref{Def:LDforBSDP} follows from the fact that the rank-constraint in the definition of $\mathcal{D}^n$ (if any), is implied by $\mathcal{A}(X) = b$, $X \in B^{n \times n}$ and $X \succeq \mathbf{0}$.
After dualizing $X \succeq \mathbf{0}$ and  $\mathcal{A}(X) = b$, we obtain the following feasible set of remaining constraints, which we denote by $P(\mathcal{K}^p)$: 
\begin{align} \label{Def:PmathcalK}     P(\mathcal{K}^p) := \left\{ X \in \mathcal{S}^n \cap B^{n \times n} \, : \, \, X[K_\ell] \in \mathcal{D}^{m_\ell} \text{ for all } K_\ell \in \mathcal{K}^p\right\}. 
\end{align} 
Let us now check whether this set $P(\mathcal{K}^p)$ can be effectively applied as the feasible set of the Lagrangian dual function $g(S,\lambda)$. Let $X^* \in \arg\min \{\mathcal{L}(X, \hat{S}, \hat{\lambda}) : X \in P(\mathcal{K}^p) \}$ for some $(\hat{S}, \hat{\lambda})$.
First, since the index sets in $\mathcal{K}^p$ are mutually disjoint, the submatrices $\{X[K_\ell] \, : \,  K_\ell \in \mathcal{K}^p \}$ have a pairwise disjoint support in $X$. Hence, the optimal submatrices $X^*[K_\ell]$ can be obtained via a complete enumeration over $\mathcal{D}^{m_\ell}$ for all $K_\ell \in \mathcal{K}^p$ independently.  Moreover, any element $X^*_{ij}$ where $i$ and $j$ do not belong to the same set in $\mathcal{K}^p$ is set to $u$ if $(C - {\hat{S}} + \mathcal{A}_1^*({\hat{\lambda}}))_{ij} < 0$ and to $l$ otherwise. We conclude that the optimization over $P(\mathcal{K}^p)$ is tractable for small $p$.

It follows from Theorem~\ref{Thm:AlternativeLD} that the optimal Lagrangian dual value of~\eqref{Def:LDforBSDP} with dualized constraints $\mathcal{A}(X) = b$ and $X \succeq \mathbf{0}$ equals
\begin{align} \label{Def:LDforBSDPequivalent} 
   \begin{aligned} 
   {z_{LD}^p :=} \min \quad & \langle C, X \rangle \\
    \text{s.t.} \quad & \mathcal{A}(X) = b,~X \succeq \mathbf{0} \\
     & X[K_\ell] \in \mathcal{P}^{m_\ell} \qquad \forall K_\ell \in \mathcal{K}^p \\ 
     & l \leq X_{ij} \leq u~\text{for all } (i,j) \in [n]\times [n], \end{aligned}
\end{align}
where $\mathcal{P}^{m_\ell}  := \Conv (\mathcal{D}^{m_\ell})$ denotes the convex hull of $\mathcal{D}^{m_\ell}$.

The value of $p$, i.e., the maximum size of the sets in $\mathcal{K}^p$, has an impact on the quality of~\eqref{Def:LDforBSDPequivalent}. If $p$ increases, the proportion of entries that is required to be both integer and PSD in the subproblems becomes larger, leading to improved bounds {as provided in the theorem below.} However, this comes at the cost of computation time, as the size of $\mathcal{D}^{m_\ell}$ often grows exponentially, see Table~\ref{Tab:IntegerPSDSets}. Formally, the hierarchy of the Lagrangian dual approach with respect to the value of $p$ can be captured in the following theorem. 

\begin{theorem} \label{THM:HierarchyBounds}
    Let $\mathcal{K}^{p_1}$ and $\mathcal{K}^{p_2}$ with $p_1 \leq p_2$ denote two packings on the set $[n]$, where for each two distinct $i, j \in [n]$ we have: if $i$ and $j$ are in the same subset in $\mathcal{K}^{p_1}$, then $i$ and $j$ are also in the same subset in $\mathcal{K}^{p_2}$. Then,
    \begin{align*}
        z_{SDP} \leq z_{LD}^{p_1} \leq z_{LD}^{p_2} \leq z_{ISDP},
    \end{align*}
    where $z_{LD}^{p_i}$ denotes the Lagrangian dual bound obtained by optimizing over $P(\mathcal{K}^{p_i})$ in each subproblem. 
\end{theorem}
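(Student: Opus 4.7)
The sandwich $z_{SDP} \leq z_{LD}^{p_1} \leq z_{LD}^{p_2} \leq z_{ISDP}$ splits into three claims. For the outer two I would invoke Corollary~\ref{Cor:sandwich} applied to the reformulation~\eqref{Def:LDforBSDP} via the explicit equivalence~\eqref{Def:LDforBSDPequivalent}. Concretely, any feasible point of~\eqref{pureISDP} satisfies $X[K_\ell] \in \mathcal{D}^{m_\ell} \subseteq \mathcal{P}^{m_\ell}$ for every block, so~\eqref{Def:LDforBSDPequivalent} is a continuous relaxation of~\eqref{pureISDP}, yielding $z_{LD}^{p_2} \leq z_{ISDP}$. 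Conversely, each added constraint $X[K_\ell] \in \mathcal{P}^{m_\ell}$ is valid for~\eqref{SDP} and subsumes the box bounds on entries inside $K_\ell$, so~\eqref{Def:LDforBSDPequivalent} for $\mathcal{K}^{p_1}$ is a tightening of~\eqref{SDP}, giving $z_{SDP} \leq z_{LD}^{p_1}$.

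The crux is the monotonicity $z_{LD}^{p_1} \leq z_{LD}^{p_2}$. Since the two instances of~\eqref{Def:LDforBSDPequivalent} share the same objective and ambient constraints, it suffices to prove the feasible-set containment: every $X$ satisfying $X[K'] \in \mathcal{P}^{|K'|}$ for all $K' \in \mathcal{K}^{p_2}$ also satisfies $X[K] \in \mathcal{P}^{|K|}$ for all $K \in \mathcal{K}^{p_1}$. The central tool I would establish is the following \emph{restriction lemma}: if $K \subseteq K'$ and $X[K'] \in \mathcal{P}^{|K'|}$, then $X[K] \in \mathcal{P}^{|K|}$. To verify it, write $X[K']$ as a convex combination $\sum_i \alpha_i Y_i$ with $Y_i \in \mathcal{D}^{|K'|}$; since principal-submatrix extraction is linear, $X[K] = \sum_i \alpha_i Y_i[K]$, and each $Y_i[K]$ inherits integer entries, positive semidefiniteness, and rank at most $\rank(Y_i)$, hence lies in $\mathcal{D}^{|K|}$.

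The packing hypothesis now closes the argument. For any $K \in \mathcal{K}^{p_1}$ with $|K| \geq 2$, every pair of distinct elements of $K$ must lie in a common block of $\mathcal{K}^{p_2}$; as the blocks of a packing are pairwise disjoint, this common block is unique and contains all of $K$, so the restriction lemma applies. Singletons $K = \{i\} \in \mathcal{K}^{p_1}$ not contained in any block of $\mathcal{K}^{p_2}$ are handled directly, since $X_{ii} \in \mathcal{P}^1 = \Conv(B \cap [0,\infty))$ already follows from $X \succeq \mathbf{0}$ together with the ambient box bound $l \leq X_{ii} \leq u$ for the sets $B$ listed in Table~\ref{Tab:IntegerPSDSets}. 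The main technical point I anticipate is the rank component of the restriction lemma, resolved by the elementary observation $\rank(Y[K]) \leq \rank(Y)$, which propagates the global rank cap $r$ unchanged to any principal submatrix.
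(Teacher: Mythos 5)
Your proof follows essentially the same route as the paper's: the outer inequalities come from Corollary~\ref{Cor:sandwich}, and the middle inequality $z_{LD}^{p_1} \leq z_{LD}^{p_2}$ is obtained by showing feasible-set containment between the two instances of~\eqref{Def:LDforBSDPequivalent}, using the fact that each $K_\ell \in \mathcal{K}^{p_1}$ sits inside some $K_{\ell^*} \in \mathcal{K}^{p_2}$ and that principal submatrices of elements of $\mathcal{P}^{m_{\ell^*}}$ lie in $\mathcal{P}^{m_\ell}$. You merely supply a bit more detail than the paper (the explicit convex-combination argument for the restriction lemma, and the singleton/uncovered-index edge case), but the decomposition and key ideas coincide.
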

\begin{proof}
    For both packings $\mathcal{K}^{p_i}$, the inequalities $z_{SDP} \leq z_{LD}^{p_i} \leq z_{ISDP}$ follow immediately from Corollary~\ref{Cor:sandwich}. Let $X$ be feasible for~\eqref{Def:LDforBSDPequivalent} with $\mathcal{K}^{p_2}$. For each $K_{\ell} \in \mathcal{K}^{p_1}$, we know that $K_{\ell} \subseteq K_{\ell^*}$ for some $K_{\ell^*} \in \mathcal{K}^{p_2}$. Hence, $X[K_{\ell}]$ is a submatrix of $X[K_{\ell^*}]$, which is an element of $\mathcal{P}^{m_\ell^*}$. Since  the $m_\ell \times m_\ell$ submatrices of all elements in $\mathcal{P}^{m_{\ell^*}}$ are in $\mathcal{P}^{m_\ell}$, it follows that $X[K_{\ell}] \in \mathcal{P}^{m_\ell}$ for all $K_\ell \in \mathcal{K}^{p_1}$. So, $X$ is feasible for~\eqref{Def:LDforBSDPequivalent} with $\mathcal{K}^{p_1}$, resulting in $z_{LD}^{p_1} \leq z_{LD}^{p_2}$. 
\end{proof}

    Semidefinite relaxations including constraints of the form $X[K_\ell] \in \Conv \left( \mathcal{D}^{m_\ell} \right)$ with $\mathcal{D}^{m_\ell} = \mathcal{S}^{m_\ell}_+(\{0,1\})$ or $\mathcal{D}^{m_\ell} = \mathcal{S}^{m_\ell}_+(\{\pm 1\})$  have been studied in~\cite{AdamsAnjosRendlWiegele,RendlGaar}, in which these are referred to as exact subgraph constraints (ESCs). 
    Those papers also consider overlapping submatrices, i.e., $\mathcal{K}^p$ is not required to be a packing, and include the underlying graph structure of the considered problems. Clearly, we can also introduce both ingredients. While overlapping constraints would result in a more difficult  set {$P(\mathcal{K}^p)$} in ~\eqref{def:dual_function} than the one we propose here, additional  graph structure can make the Lagrangian dual problem easier to solve.

\section{Approaches for solving the Lagrangian dual problem}
\label{Sect:SolvingLagDual}
In this section we present several approaches to  compute Lagrangian dual bounds for problems of the form~\eqref{ISDP}. 
Although the optimum to the Lagrangian dual~\eqref{Eq:LD} is theoretically the same as the solution to~\eqref{EquivalentLD}, an explicit description of $\Conv(P)$ is in most cases unavailable. 
Therefore, the here presented algorithms are designed to solve the optimization problem~\eqref{Eq:LD}. Since the Lagrangian dual bounds of the hierarchy presented in Section~\ref{sect:Hierarchy of Lagrangian dual} also fit in the framework of~\eqref{Eq:LD}, the presented algorithms can also be used to compute problems of the form~\eqref{Def:LDforBSDP}. For an overview of algorithms for solving Lagrangian relaxations of MILPs see e.g.,~\cite{Bragin}.

The Lagrangian dual function  $g(S, \lambda)$, see \eqref{def:dual_function}, is a piecewise linear concave function. Due to its non-differentiability, its optimization relies on the use of subgradients. A subgradient of $g(\cdot,\cdot)$ at a point $(S^*, \lambda^*) \in \mathcal{S}^n_+ \times \mathbb{R}^{m_1}$ is a pair $(\Gamma, \gamma) \in \mathcal{S}^n \times \mathbb{R}^{m_1}$ such that
\begin{align*} 
    g(S,\lambda) \leq g(S^*, \lambda^*) + \langle \Gamma, S - S^* \rangle + \gamma^\top(\lambda - \lambda^*) \qquad \text{for all } (S, \lambda) \in \mathcal{S}^n_+ \times \mathbb{R}^{m_1}. 
\end{align*}
Let $X^* \in \arg \min \{\mathcal{L}(X, S^*, \lambda^*) \, : \, \, X \in P\}$. Then, 
\begin{align}\label{gSubgradients}
    g(S,\lambda) \leq g(S^*, \lambda^*) +  \langle -X^*, S - S^* \rangle + (\mathcal{A}_1(X^*) - b_1)^\top (\lambda - \lambda^*).
\end{align}
This shows that $(\Gamma, \gamma) = (-X^*, \mathcal{A}_1(X^*) - b_1)$ is a subgradient of $g(\cdot, \cdot)$ at $(S^*,\lambda^*)$. 
All here presented algorithms exploit subgradients to compute the Lagrangian dual bound~\eqref{Eq:LD}.

\subsection{A projected-deflected  subgradient  algorithm} \label{Subsec:ProjectedSubgradient}

We present a projected-deflected subgradient algorithm for  computing Lagrangian dual bounds. 
The subgradient method was developed by Shor in the 1960s as an
extension of the gradient method to non-differentiable functions, see e.g.,~\cite{Shor:Subgradient}.
The subgradient method is a first-order method in which
the slow rate of convergence is compensated by the  low complexity of each iteration.
The results in \cite{Frangioni2017OnTC} show that  if extensive tuning
is performed, the subgradient method can be competitive with more sophisticated approaches,  when the required tolerance is not too high.

The projected-deflected subgradient algorithm starts with an initial set of dual multipliers $(S^0, \lambda^0)$ $\in \mathcal{S}^n_+ \times \mathbb{R}^{m_1}$. Then, it iteratively obtains $g(S^\ell, \lambda^\ell)$ by minimizing $\mathcal{L}(X, S^\ell, \lambda^\ell)$ over $X \in P $, see \eqref{def:setP}, yielding an optimal solution $X^\ell$.
As indicated before, we assume this optimization to be tractable, e.g., for the problem~\eqref{Def:LDforBSDP} we exploit the structure of the sets provided in Section~\ref{sect:Hierarchy of Lagrangian dual}.
The algorithm now computes the subgradient $(\Gamma^\ell, \gamma^\ell) := (-X^\ell, \mathcal{A}_1(X^\ell) - b_1)$. The dual multipliers are updated by a step update in the direction $(D^\ell, d^\ell) \in \mathcal{S}^n \times \mathbb{R}^{m_1}$, where this direction is based on the subgradient  $(\Gamma^\ell, \gamma^\ell)$. This can be done in several ways, as we discuss below.
Then, we set
\begin{align} \label{Eq:DualUpdateSG}
  S^{\ell+1} \gets \mathcal{P}_{\mathcal{S}^n_+}(S^\ell + \alpha^\ell D^\ell) \quad \text{and} \quad \lambda^{\ell+1} \gets \lambda^{\ell} + \beta^\ell d^\ell,  
\end{align}
where $\alpha^\ell$ and $\beta^\ell$ are appropriate stepsize parameters. The resulting dual matrix $S^\ell + \alpha^\ell D^\ell$ is projected onto the PSD cone in order to stay dual feasible. 

Different strategies for the choice of the dual updates $(D^\ell, d^\ell)$ and the stepsize parameters $\alpha^\ell$ and $\beta^\ell$ are proposed in the literature for the case of integer linear programming. For an overview of such strategies, see e.g.,~\cite{Frangioni2017OnTC} and references therein.  
In our preliminary study \cite{deMijerThesis}, we considered several of those strategies and altered them to the case of MISDP. In particular,  the standard subgradient update with Polyak’s stepsize~\cite{Polyak}, the deflected subgradient algorithm using the update scheme from Camerini et al.~\cite{CameriniEtAl}, the deflected subgradient algorithm using the update scheme from Sherali and Ulular~\cite{SheraliUlular}, and the conditional subgradient algorithm with the Polyak update~\cite{Polyak} were implemented.
In these  preliminary tests the deflected subgradient algorithm with the update scheme from~\cite{SheraliUlular} 
turned out to perform best on the subset of instances tested. Therefore, we further consider only that variant of the subgradient algorithm.
\medskip

The deflected subgradient algorithm \cite{SheraliUlular} constructs the dual update vector as a linear combination between the subgradient vector and the previous dual update vector, i.e.,
    \begin{align}\label{dualUpdate1}
        D^\ell = \Gamma^\ell + {\varphi^\ell_1} D^{\ell-1} \quad \text{and} \quad d^\ell = \gamma^\ell + {\varphi_2^\ell} d^{\ell-1},
    \end{align}
    where { $\varphi_1^\ell, \varphi_2^\ell \geq 0$} are deflection parameters. This choice of the dual update intends to reduce a zigzagging pattern between consecutive dual multipliers.
    Sherali and Ulular~\cite{SheraliUlular} propose to take
        \begin{align}\label{dualUpdate2} 
            \varphi^\ell_1 = \frac{||\Gamma^\ell||_F}{||D^\ell||_F} \quad \text{and} \quad \varphi^\ell_2 = \frac{||\gamma^\ell||}{||d^\ell||},
        \end{align}
        in order to let the new dual update vector bisect the angle between the current subgradient $\Gamma^\ell$ (resp.~$\gamma^\ell$) and the previous dual update $D^{\ell-1}$ (resp.~$d^{\ell-1}$).  
        
    The stepsize parameters $\alpha^\ell$ and $\beta^\ell$  can be chosen according to the Polyak update~\cite{Polyak}, that is
    \begin{align}\label{stepsize}
        \alpha^\ell := \frac{\mu^\ell_1 (U^* - g(S^\ell, \lambda^\ell))}{||\Gamma^\ell ||^2_F} \quad \text{and} \quad \beta^\ell := \frac{\mu^\ell_2 (U^* - g(S^\ell, \lambda^\ell))}{||\gamma^\ell ||^2}. 
    \end{align}
    Here $U^*$ is an upper bound on the optimal Lagrangian dual value $z_{LD}$, which can be obtained by a heuristic on the primal problem, and $0 < \mu_1^\ell,  \mu_2^\ell \leq 2$ are stepsize parameters. The justification for this step length is its theoretical convergence when $U^*$ is set equal to $z_{LD}$, see~\cite{Polyak}.

A pseudo-code of the projected-deflected subgradient algorithm is given in Algorithm~\ref{Alg:projectedSubgradient}. 

\begin{algorithm}[h!]
\footnotesize
\caption{Projected-deflected subgradient algorithm for solving~\eqref{Eq:LD}}\label{Alg:projectedSubgradient}
\begin{algorithmic}[1]
\Require $C, \mathcal{A}_1, \mathcal{A}_2, b_1, b_2,$ and $B_{ij}$ for all $(i,j) \in \mathcal{J}$.
\State Initialize dual pair $(S^0, \lambda^0) \in \mathcal{S}^n_+ \times \mathbb{R}^{m_1}$ and set $\ell = 0$, $\hat{z}=- \infty$. 
\While{stopping criteria are not met}
\State Compute $g(S^\ell, \lambda^\ell)$ and obtain $X^\ell {\in} \arg \min \{\mathcal{L}(X, S^\ell, \lambda^\ell) \, : \, \, X \in P\}$.
\If {$\hat{z} < g(S^\ell, \lambda^\ell)$} 
\State $\hat{z} \gets g(S^\ell, \lambda^\ell)$
\EndIf
\State Compute subgradients $\Gamma^\ell = - X^\ell$, $\gamma^\ell = \mathcal{A}_1(X^\ell) - b_1$. 
\State Update stepsize parameters $\alpha^\ell$, $\beta^\ell$ using \eqref{stepsize},
and dual updates $D^\ell, d^\ell$ using \eqref{dualUpdate1}--\eqref{dualUpdate2}. 
\State Update $S^{\ell+1} \gets \mathcal{P}_{\mathcal{S}^n_+}(S^\ell + \alpha^\ell D^\ell)$, $\lambda^{\ell+1} \gets \lambda^\ell + \beta^\ell d^\ell$. 
\State $\ell \gets \ell+1$
\EndWhile

\Ensure $\hat{z}$
\end{algorithmic}
\end{algorithm}
The subgradient algorithm stops  when the difference between consecutive dual multipliers is small, i.e., $||S^\ell - S^{\ell-1}||_F < \varepsilon_1$ and $||\lambda^\ell - \lambda^{\ell-1}|| < \varepsilon_2$ for some predefined parameters $\varepsilon_1, \varepsilon_2 > 0$. Moreover, we implement a stagnation criterion:  the algorithm stops if there has been no improvement in the value of the objective function for the last $N_{\text{stag}}$ iterations.

\subsection{A projected-accelerated subgradient algorithm} \label{Subsec:Nesterov}

We consider here a version of the accelerated gradient method introduced by Nesterov~\cite{Nesterov1983AMF}. 
Namely, we adopt the approach from~\cite{Nesterov1983AMF} and adjust it for the case of MISDP.  
The resulting projected-accelerated subgradient algorithm  computes the  Lagrangian dual bound \eqref{Eq:LD}. 

The projected-accelerated subgradient algorithm  starts with an initial set of dual multipliers $(S^0, \lambda^0) \in \mathcal{S}^n_+ \times \mathbb{R}^{m_1}$ and introduces auxiliary variables  
$Y^0:=S^0$ and $\delta^0 := \lambda^0$. 
Then, the algorithm computes 
$g(Y^0, \delta^0)=\min_{X \in P} \mathcal{L}(X, Y^0, \delta^0)$, 
yielding an optimal solution $X^0$ and corresponding subgradient $(\Gamma^0,\gamma^0)$. 
Next, the algorithm iteratively performs updates of dual multipliers by employing the following two-step iteration 
\begin{align*}  
S^{\ell+1} \gets \mathcal{P}_{\mathcal{S}^n_+}(Y^\ell + \alpha^\ell\Gamma^\ell), \quad
& Y^{\ell+1} = S^{\ell+1} + {\frac{\eta_\ell-1}{\eta_{\ell+1}}} (S^{\ell+1}-S^{\ell}) \\
\lambda^{\ell+1} \gets \delta^{\ell} + \beta^\ell \gamma^\ell, \quad
& \delta^{\ell+1} = \lambda^{\ell+1} + {\frac{\eta_\ell-1}{\eta_{\ell+1}} }(\lambda^{\ell+1}-\lambda^{\ell}),  \quad \ell=0,1,\ldots,
\end{align*}
where $\alpha^\ell$, $\beta^\ell$ are  stepsize parameters  chosen with respect to the Polyak update~\cite{Polyak}, see also \eqref{stepsize},
  and {$\{ \eta_\ell\}_{\ell=0}^\infty$}  the sequence generated as follows
$\eta_0=0$ and $\eta_{\ell+1} =(1+\sqrt{1+4\eta_\ell^2})/2$, $\ell=0,1,\ldots.$ 
 Iteratively, the algorithm performs a step of gradient ascent to go from $Y^\ell$  (resp.~$\delta^\ell$) to $S^{\ell+1}$ (resp.~$\lambda^{\ell+1}$), followed by a correction in the auxiliary variables, i.e., the so-called look-ahead step.
 Nesterov's two-point step iteration approach from~\cite{Nesterov1983AMF} results in an accelerated gradient algorithm that achieves an improved convergence rate with respect to a simple gradient algorithm.
 Matrix  $(Y^\ell + \alpha_\ell\Gamma^\ell)$  is projected onto the PSD cone so that the dual variable stays dual feasible.
 A pseudo-code of projected-accelerated subgradient algorithm is presented in Algorithm~\ref{Alg:projectedAcceleratedSubgradient}.

We use the same stopping criteria for this algorithm as for Algorithm~\ref{Alg:projectedSubgradient}, where we monitor a stagnation criterion with respect to $Y^\ell$ instead of $S^\ell$.
\begin{algorithm}[h!]
\footnotesize
\caption{Projected-accelerated subgradient algorithm for solving~\eqref{Eq:LD}}\label{Alg:projectedAcceleratedSubgradient}
\begin{algorithmic}[1]
\Require $C, \mathcal{A}_1, \mathcal{A}_2, b_1, b_2$, and $B_{ij}$ for all $(i,j) \in \mathcal{J}$
\State Initialize dual pair $(S^0, \lambda^0) \in \mathcal{S}^n_+ \times \mathbb{R}^{m_1}$.
\State Set $Y^0=S^0$, $\delta^0=\lambda^0$,  ${\eta_0=0}$ and $\ell = 0$.
\While{stopping criteria are not met}
\State Obtain $X^\ell {\in} \arg \min \{\mathcal{L}(X, Y^\ell, \delta^{\ell}) \, : \, \, X \in P\}$.
\State Compute subgradients $\Gamma^\ell = - X^\ell$, $\gamma^\ell = \mathcal{A}_1(X^\ell) - b_1$. 
\State Update stepsize parameters $\alpha^\ell$, $\beta^\ell$ using \eqref{stepsize}.
\State Update $S^{\ell+1} \gets \mathcal{P}_{\mathcal{S}^n_+}(Y^\ell +\alpha^\ell\Gamma^\ell)$,~~
  $\lambda^{\ell+1} \gets \delta^{\ell} + \beta^\ell \gamma^\ell$.
\State Compute {$\eta_{\ell+1} = \frac{1}{2}\left( 1+\sqrt{1+4\eta_\ell^2} \right)$.}
\State Update  $Y^{\ell+1} \gets  S^{\ell+1} + {\frac{\eta_\ell-1}{\eta_{\ell+1}}} (S^{\ell+1}-S^{\ell})$, ~~
$\delta^{\ell+1} \gets \lambda^{\ell+1} + {\frac{\eta_\ell-1}{\eta_{\ell+1}} }(\lambda^{\ell+1}-\lambda^{\ell})$. 
\State $\ell \gets \ell+1$
\EndWhile
\State Compute $\hat{z}=g(S^{\ell}, \lambda^{\ell})$.
\Ensure $\hat{z}$
\end{algorithmic}
\end{algorithm}

\subsection{A projected bundle algorithm} \label{Subsec:Projectedbundle}

We present a projected bundle algorithm for solving the Lagrangian dual problem \eqref{Eq:LD}.
The bundle method was introduced in the 1970's \cite{Kiwiel,Lemarchal1978NonsmoothOA} as a method for optimizing non-smooth functions.
The bundle method has already been exploited for computing SDP bounds, however, our algorithm differs from  SDP-based bundle algorithms from the literature. Namely,  function evaluations in our algorithm are over a polyhedral set, while in other bundle algorithms function evaluations are over a spectrahedron. We consider here a version of the proximal bundle algorithm~\cite{Kiwiel1990ProximityCI}.

The bundle algorithm  starts with an initial set of dual multipliers $(S^0, \lambda^0) \in \mathcal{S}^n_+ \times \mathbb{R}^{m_1}$ and obtains $X^0$ from  $g(S^0, \lambda^0)=\min \left\{ \mathcal{L}(X, S^0, \lambda^0) \, : \, \, X \in P \right\}$,  see~\eqref{def:dual_function}. 
Then, the algorithm computes a subgradient of $g(\cdot, \cdot)$ at $(S^0,\lambda^0)$, that is,  $(\Gamma^0, \gamma^0) = (-X^0, \mathcal{A}_1(X^0) - b_1)$.
The bundle algorithm is an iterative algorithm that maintains in each iteration the  best current approximation $(\widehat{S}, \widehat{\lambda})$ to the maximizer of $g(\cdot,\cdot)$,  (some of) previously computed points $X^i\in P$ and  the corresponding subgradients
$(\Gamma^i, \gamma^i) = (-X^i, \mathcal{A}_1(X^i) - b_1)$. 
The set that contains feasible points $X^i$ and corresponding subgradients is  called {\em the  bundle} and denoted by $\mathcal{B}$. We denote by $J_\mathcal{B}$ its index set.

One iteration of the algorithm is as follows. We assume to have the bundle ${\mathcal B}= \{( X^i,\Gamma^i, \gamma^i) :~~i \in J_\mathcal{B}\}$.
To compute the next trial point $(S_{\rm trial}, \lambda_{\rm trial})$, the bundle method combines the following two concepts. 
The first concept is to  approximate the function $g(S,\lambda)$ in the neighborhood of the previous iterates. That is accomplished by the following subgradient model:
$$
g_{\rm appr}(S,\lambda) := \min \left\{ \mathcal{L}(X, S, \lambda) \, : \, \, X \in \mbox{conv}\left ( \{X^i \, : \, \, i \in J_\mathcal{B} \}\right ) \right\}, 
$$
which can be rewritten as follows
\begin{align} \label{gapprox}
g_{\rm appr}(S,\lambda)
& =  \min_{\sigma \in \Delta} ~  \sum_{i \in J_\mathcal{B}} \sigma_i \left(  \langle  C, X^i \rangle  +  \langle  \Gamma^i, S \rangle  + \lambda^\top \gamma^i  \right),
\end{align} 
where  $\Delta := \{ \sigma \in \mathbb{R}^{|{J_\mathcal{B}}|} :~ \mathbf{1}^\top \sigma =1, ~ \sigma \geq 0 \}$.
The second concept is to stay in the vicinity of the center point  $(\widehat{S}, \widehat{\lambda})$.
This can be achieved by using the proximal point approach that exploits the Moreau–Yosida regularization of $g_{\rm appr}(\cdot,\cdot)$. The combination of the two ideas results in the following concave problem whose solution gives  a new trial point:
\begin{align} 
\begin{aligned} \label{maxBundle}
 \quad \max \quad & g_{\rm appr}(S,\lambda) - \frac{t}{2} ( || S - \widehat{S} ||_F^2 + ||\lambda - \widehat{\lambda} ||^2)    \\
\text{s.t.} \quad &  S \succeq \mathbf{0},~ \lambda \in \mathbb{R}^{m_1}. \end{aligned} 
\end{align} 
Here $t> 0$ is known as the proximity parameter.
Since the above problem is a concave quadratic optimization problem, the complexity of each bundle iteration is more costly than a subgradient iteration.
Moreover, the above problem is a quadratic semidefinite programming problem. 
Nevertheless, we proceed in the line of the bundle literature that commonly proposes to solve the dual problem of \eqref{maxBundle}.
That is, we aim to solve the following optimization problem:
\begin{align}
\min_{Q\succeq 0}  ~& \max_{\substack{S\in {\mathcal S}^n \\ \lambda \in \mathbb{R}^{m_1}}} \min_{\sigma \in \Delta}~   \sum_{i \in J_\mathcal{B}} \sigma_i \left(  \langle  C, X^i \rangle  +  \langle  \Gamma^i, S \rangle  + \lambda^\top \gamma^i  \right)
- \frac{t}{2} ( || S - \widehat{S} ||_F^2 + ||\lambda - \widehat{\lambda} ||^2)  + \langle S,Q \rangle \nonumber \\
 =& \min_{\substack{\sigma\in \Delta\\Q\succeq 0}}  ~\max_{\substack{S\in {\mathcal S}^n \\ \lambda \in \mathbb{R}^{m_1}}}   \sum_{i \in J_\mathcal{B}} \sigma_i \left(  \langle  C, X^i \rangle  +  \langle  \Gamma^i, S \rangle  + \lambda^\top \gamma^i  \right)
- \frac{t}{2} ( || S - \widehat{S} ||_F^2 + ||\lambda - \widehat{\lambda} ||^2)  + \langle S,Q \rangle. \label{MinMaxBundle}
\end{align} 
Indeed, we may swap around the minimization w.r.t.~$\sigma$ and the maximization w.r.t.~$(S, \lambda)$, due to the set $\Delta$ being compact and the objective function being strongly concave in $(S,\lambda)$ and linear in $\sigma$.
From the first-order optimality conditions for the inner maximization problem we have
\begin{align*}
S =  \widehat{S} + \frac{1}{t} \left (Q+ \sum_{i \in J_\mathcal{B}} \sigma_i \Gamma^i \right ),
\quad 
\lambda = \widehat{\lambda} + \frac{1}{t} \sum_{i\in J_\mathcal{B}} \sigma_i \gamma^i.
\end{align*}
We incorporate this into \eqref{MinMaxBundle} and obtain the following minimization problem:
\begin{align}
\begin{aligned} \label{finalBundleSolve}
\min_{\sigma\in \Delta,Q\succeq 0} & ~ \frac{1}{2t} \langle Q,Q \rangle  
+ \frac{1}{2t}  \left \lVert  \sum_{i=1}^\ell  \sigma_i \gamma^i \right \rVert^2
+ \frac{1}{2t}  \left \langle  \sum_{i=1}^\ell  \sigma_i X^i,   \sum_{i=1}^\ell  \sigma_i X^i \right \rangle \\
& +  \left \langle  \sum_{i=1}^\ell  \sigma_i X^i, C -  \widehat{S}  - \frac{1}{t} Q  \right \rangle 
+  \left(  \sum_{i=1}^\ell   \sigma_i \gamma^i \right)^\top  \widehat{\lambda}   
+  \left \langle \widehat{S},Q  \right \rangle.
\end{aligned}
\end{align}
This problem is a convex quadratic optimization problem, and we solve it approximately by keeping one set of the variables constant.
In particular, keeping $\sigma$ fixed we solve for $Q = \mathcal{P}_{\mathcal{S}^n_+} \left ( \sum_{i \in J_\mathcal{B}}  \sigma_i X^i  - t \widehat{S} \right ),$
and keeping $Q$ constant results in a convex quadratic problem over the set $\Delta$ that can be efficiently solved.
Thus, we start with $Q=\mathbf{0}$ and solve for $\sigma$, which we thereafter keep constant to solve for $Q$,
and iterate this process several times to get (approximate) solutions to \eqref{finalBundleSolve}.
In practice, one  has to make a few iterations to obtain a  good approximate solution. 
However, the most expensive computation is projection onto the PSD cone.
Using the final estimates of $Q$ and $\sigma$, say  $\widetilde{Q}$ and $\widetilde{\sigma}$,  we compute the new trial point as follows
\begin{align}\label{newSLambda}
S_{\rm trial} =   \mathcal{P}_{\mathcal{S}^n_+} \left (\widehat{S} + \frac{1}{t} \left (\widetilde{Q}+ \sum_{i \in J_\mathcal{B}} \widetilde{\sigma}_i \Gamma^i \right )\right ) \quad \text{and}
\quad
\lambda_{\rm trial} = \widehat{\lambda} + \frac{1}{t} \sum_{i \in J_\mathcal{B}} \widetilde{\sigma}_i \gamma^i.
\end{align}
To finish one iteration of the bundle algorithm, we still need to evaluate \eqref{def:dual_function} at $(S_{\rm trial},\lambda_{\rm trial})$,
which results in the matrix $X_{\rm trial} \in P$.
If $\widetilde{\sigma}_i = 0$ for some $i$, the corresponding subgradients have no influence in the optimization and therefore we
remove them from the bundle~${\mathcal B}$ and from further  computations.

The bundle algorithm now needs to decide whether or not $(S_{\rm trial}, \lambda_{\rm trial})$ becomes the next center point $(\widehat{S}, \widehat{\lambda})$.
The algorithm first checks whether  the improvement of the objective function in the trial point is at least a  $\rho \in (0,1)$ fraction of the improvement  that the subgradient model provides.
If it does, $(S_{\rm trial}, \lambda_{\rm trial})$ becomes the next center point, and the bundle  $\mathcal B$ is updated with  $X_{\rm trial}$ and the corresponding subgradient.
In the literature this is called  a serious step.
Otherwise, the algorithm keeps the last center point, but nevertheless updates  $\mathcal{B}$ with  $X_{\rm trial}$ and the corresponding subgradient, 
resulting into a so-called  null step.

For updating the  proximity parameter $t$, we use an  update similar to the one proposed by Kiwiel~\cite{Kiwiel1990ProximityCI}.
The projected bundle algorithm stops if the difference of the function value at the current center point and the function value of the subgradient model at the trial point is smaller than some prescribed tolerance. The algorithm also stops if it reaches the maximum number of iterations. 
A pseudo-code of the projected bundle algorithm is given in Algorithm~\ref{Alg:projectedBundle}.
\begin{algorithm}[h!]
\footnotesize
\caption{Projected bundle algorithm for solving~\eqref{Eq:LD}}\label{Alg:projectedBundle}
\begin{algorithmic}[1]
\Require $C, \mathcal{A}_1, \mathcal{A}_2, b_1, b_2$, and  $B_{ij}$ for all $(i,j) \in \mathcal{J}$
\State Initialize dual pair $(S^0, \lambda^0) \in \mathcal{S}^n_+ \times \mathbb{R}^{m_1}$, $t$ and $\rho$, and set $\ell = 0$.
\State Compute $X^0 {\in} \arg \min \left\{ \mathcal{L}(X, S^0, \lambda^0) \, : \, \, X \in P \right\}$ and the subgradient $(\Gamma^0,\gamma^0)$.
Set the bundle to ${\mathcal B}=\{  (X^0,\Gamma^0, \gamma^0) \}$.
\While{stopping criteria are not met}
\State Solve \eqref{MinMaxBundle} to obtain  $\widetilde{Q}$ and $\widetilde{\sigma}$. 
\State  Determine next trial point $(S_{\rm trial},\lambda_{\rm trial})$ using \eqref{newSLambda}.
\State Compute $g(S_{\rm trial},\lambda_{\rm trial})$ and obtain $X_{\rm trial} {\in} \arg \min \left\{ \mathcal{L}(X, S_{\rm trial}, \lambda_{\rm trial}) \, : \, \, X \in P \right\}$.
\State Obtain subgradients $\Gamma_{\rm trial}=-X_{\rm trial}$, $\gamma_{\rm trial} = {\mathcal A}_1(X_{\rm trial})-b_1$.
\State Decide whether $(S_{\rm trial}, \lambda_{\rm trial})$ becomes $(\widehat{S}, \widehat{\lambda})$ (serious step) or not (null step).
\State Update the bundle $\mathcal{B}$ and the parameter $t$, $\ell \gets \ell +1$.
\EndWhile

\Ensure $g(\widehat{S}, \widehat{\lambda})$
\end{algorithmic}
\end{algorithm}

\section{The max-{\em k}-cut problem} \label{Sect:MaxKCut}

The max-$k$-cut problem is the problem of partitioning the vertex set of a graph into  $k$ subsets such that the total weight
of edges joining different sets is maximized. For the case that $k=2$, the max-$k$-cut problem is known as the max-cut problem. The max-$k$-cut problem is $\mathcal{NP}$-hard~\cite{Arora1992ProofVA} 
and has many applications such as VLSI design, frequency planning, statistical physics and sports scheduling~\cite{BarahonaEtAl,Mitchell}.

Let $G = (V,E)$ be an undirected graph with $n$ vertices and  $W = (w_{ij})$ a weight matrix   such that $w_{ij} = 0$ if $\{i,j\} \notin E$.
Let $Z \in \{0,1\}^{n\times k}$ denote the characteristic matrix of a partitioning of $V$, where $Z_{ij} = 1$ if vertex $i$ is in subset $j$ and $Z_{ij} = 0$ otherwise. Then, $Y := ZZ^\top$ is a binary $n \times n$ PSD matrix of rank at most $k$ that satisfies $\diag(Y) = \bold{1}_n$.  
Following~\cite[Corollary 2]{DeMeijerSotirov23}, the rank constraint on the binary matrix $Y$ can be enforced by the linear matrix inequality $\begin{psmallmatrix}
     k &  \mathbf{1}_n^\top  \\
     \mathbf{1}_n & Y 
    \end{psmallmatrix} \succeq \mathbf{0}$, leading to the following ISDP formulation where $L := \Diag(W\mathbf{1}_n) - W$ denotes the weighted Laplacian matrix of $G$: 
\begin{align} \label{MaxKCutISDP}
\begin{aligned} \max \quad & \frac{1}{2} \langle L, Y \rangle\\ 
    \text{s.t.} \quad & \diag(Y) = \mathbf{1}_n, \quad  
    \begin{pmatrix}
     k &  \mathbf{1}_n^\top  \\
     \mathbf{1}_n & Y 
    \end{pmatrix} \succeq \mathbf{0}, \quad Y \in{\mathcal S}^n, \quad Y\in \{0,1\}^{n\times n}.
         \end{aligned} 
        \end{align}
A more compact reformulation of the max-$k$-cut problem can be obtained by rewriting the linear matrix inequality of~\eqref{MaxKCutISDP} to $Y - \frac{1}{k}\bold{J} \succeq \bold{0}$ using the Schur complement  lemma. By defining the new variable $X := \frac{k}{k-1}(Y - \frac{1}{k}\bold{J})$, the formulation~\eqref{MaxKCutISDP} is equivalent to the following discrete program:
\begin{align} \label{Prob:MaxCut}
    \begin{aligned} \max \quad &  \frac{k-1}{2k} \langle L, X \rangle \\ 
    \text{s.t.} \quad & \diag(X) = \mathbf{1}_n, \quad  X \succeq \mathbf{0}, \quad 
         X  \in \left \{ \frac{-1}{k-1},1 \right \}^{n \times n}.
        \end{aligned} \qquad \qquad \quad
\end{align}
Here, $X_{ij}= -1/(k-1)$ if vertices $i,j$  are in  different subsets of the partition and $X_{ij}=1$ otherwise. One can easily show that a matrix $X$ that is feasible for~\eqref{Prob:MaxCut} is of rank at most $k-1$. The formulation~\eqref{Prob:MaxCut} is well-known in the literature, see~\cite{Frieze1995ImprovedAA}, where also a geometric meaning of the program is provided. 

The following well-known SDP relaxation for the max-$k$-cut problem is obtained from \eqref{Prob:MaxCut}
 by relaxing $X_{ij}  \in \left \{ \frac{-1}{k-1},1 \right \}$ to $ \frac{-1}{k-1} \leq X_{ij} \leq 1$: 
\begin{align}
    \begin{aligned} \max \quad & \frac{k-1}{2k}  \langle L, X \rangle \\ 
    \text{s.t.} \quad & \diag(X) = \mathbf{1}_n, \quad X \succeq \mathbf{0},\quad X_{ij} \geq  \frac{-1}{k-1}, \quad \forall i,j\in [n].
          \end{aligned} \qquad \quad \label{MaxKCutSDP}
\end{align}
The constraints   $X_{ij}\leq 1$ for all $i,j$ are omitted since they are  redundant due to  $X\succeq \mathbf{0}$ and $\diag(X) = \mathbf{1}_n$.
For $k=2$, also the constraint $X \geq -\mathbf{J}_n $ is redundant, and the resulting problem is equivalent to the basic SDP relaxation for the max-cut problem, see e.g.,~\cite{GoemansWilliamson}.

The SDP relaxation can be further tightened by adding valid inequalities such as the triangle and the clique inequalities, see e.g.,~\cite{Ghaddar2011ABA}. The triangle inequalities ensure that if any two vertices $i$ and $j$ are in the same partition, and vertices $j$ and $h$ are in the same partition, then also vertices $i$ and $h$ have to be in the same partition. 
The resulting $3\binom{n}{3}$ triangle inequalities are of the form:
\begin{align} \label{triangleIneq}
    X_{ij} + X_{jh} - X_{ih} \leq 1, \quad i,j,h\in V.
\end{align}
The triangle inequalities are exploited in \cite{Fischer2006ComputationalEW,RendlEtAlMaxCut} for computing strong bounds for the max-cut problem.
The clique inequalities ensure that for each subset of $k+1$ vertices, at least two of the vertices belong to the same partition. Those inequalities are of the form:
\begin{align} \label{cliqueIneq}
   \sum_{i,j \in S,~i<j} X_{ij} \geq -\frac{k}{2}, \quad \forall S\subseteq V \quad \mbox{where} \quad |S|=k+1.
\end{align}
There are $\binom{n}{k+1}$ clique inequalities.
While  separation of triangle inequalities can be done  efficiently, the exact separation of clique inequalities is $\mathcal{NP}$-hard in general \cite{ChopraRao}. 
There exist other types of inequalities that one may add to the SDP relaxation~\eqref{MaxKCutSDP}, e.g., general clique inequalities or wheel inequalities~\cite{SousaAnjos}. 

In the sequel we follow the idea of Section~\ref{sect:Hierarchy of Lagrangian dual} to derive  bounds for the max-$k$-cut problem.
We first consider the following discrete SDP that is equivalent to~\eqref{Prob:MaxCut}:
\begin{align}  \label{Prob:MaxKCutISDP}
\begin{aligned} 
\max \quad & \frac{k-1}{2k} \left\langle L, X \right\rangle \\ 
    \text{s.t.} \quad & \diag(X) = \mathbf{1}_n, \quad  X \succeq \mathbf{0}, \quad  X  \in \left \{ \frac{-1}{k-1},1 \right \}^{n \times n}, \quad X[K_\ell] \in \widehat{\mathcal{D}}^{m_\ell} \qquad \forall K_\ell \in \mathcal{K}^p \\
    \end{aligned}
    \end{align} 
 where 
\begin{align}
    \widehat{\mathcal{D}}^{m_\ell} &:= 
     \left\{ X \in {\mathcal S}^{m_\ell}_+ \cap \left\{\frac{-1}{k-1},1 \right \}^{m_\ell \times m_\ell} \, : \, \, \diag(X)= \mathbf{1}_{m_\ell},~  \rank(X)\leq k-1 \right \},
    \label{Def:setDhat}
\end{align} 
and $p$ is a given integer such that $3\leq p \leq n$.
The set $\widehat{\mathcal{D}}^{m_\ell}$ is the image of the elements in $\mathcal{S}^{m_\ell}_+(\{0,1\},k)$ with diagonal elements equal to one under the mapping $Y \rightarrow \frac{k}{k-1}(Y - \frac{1}{k}\bold{J})$. Therefore, we can fully enumerate over its elements by exploiting Table~\ref{Tab:IntegerPSDSets}.  

The collection $\mathcal{K}^p$
is a collection of subsets of $[n]$, each of size at most $p$.
Observe that the exact submatrix constraint $X[K_\ell]$ for some set $K_\ell$ only concerns the off-diagonal elements of $X[K_\ell]$. Consequently, the submatrix constraints for two sets $K_1$ and $K_2$ that intersect on a single index, i.e., $|K_1 \cap K_2| = 1$, can be evaluated independently. Thus, we no longer require $\mathcal{K}$ to consist of mutually disjoint sets, it is sufficient to require that the elements in $\mathcal{K}$ do not pairwise intersect in more than one index. We call a collection of subsets satisfying this property an \emph{edge-packing} of~$[n]$. 
We construct the Lagrangian dual by dualizing the constraints $X \succeq \mathbf{0}$, which results in  the Lagrangian dual bound, {$z_{LD}^p$}.

\section{Computational results for the max-$k$-cut problem} \label{Subsec:MaxCutNumerics}

In this section we provide a computational study on the strength of the Lagrangian dual bounds obtained from the discrete SDP formulation of the max-$k$-cut problem discussed in Section~\ref{Sect:MaxKCut}.

\subsection{Design of the computational experiments} \label{Subsec:design}
In this section we present the design of our computational experiments. More specifically, we present the considered instances, the various bounding approaches that we take into account and several implementation details of the algorithms for computing Lagrangian dual bounds.

\subsubsection{Description of instances}
\label{Subsec:DataDescription}

We test our approaches on several instance classes from the literature\footnote{Data instances are publicly available at the BiqMac library (\url{https://biqmac.aau.at/biqmaclib.html}) and the BiqBin library (\url{https://www.biqbin.eu/}.)}. These classes are as follows: 
\begin{itemize}
    \item \textbf{Rudy instances}: These instances are randomly generated using the machine-independent graph generator rudy~\cite{rudygenerator}. The instances `g05\_$n$' are unweighted graphs on $n$ vertices, where each edge is included with probability ${1}/{2}$. The instance classes `pm1d\_$n$' and `pm1s\_$n$' contain graphs on $n$ vertices with edge densities $0.9$ and $0.1$, respectively, having edge weights chosen uniformly at random from $\{0,\pm 1\}$. The graphs `pw$d$\_$n$' are defined on $n$ vertices with edge density $d \in \{0.1, 0.5, 0.9\}$, where the weights are integers from $\{0, \ldots, 10\}$ chosen uniformly at random. Finally, the class `w$d$\_$n$' is defined similarly, except for the weights being chosen as integers from $\{-10, \ldots, 10\}$. Each instance class consists of 10 randomly generated instances. For  $n\in \{60, 80, 100\}$, the resulting test set consists of 130 instances. The authors of~\cite{BiqBin, HrgaPovh}
    consider similar instances for larger graphs with $n = 180$, leading to an additional set of 90 instances.

\item \textbf{Spinglass instances}: Toroidal 2D-grid graphs arise in physics when computing ground states for Ising spinglasses, see e.g.,~\cite{DeSimone}. Those instances are generated using the rudy graph generator~\cite{rudygenerator}. In particular,  ${\rm spinglass2pm}_{n_t,n_r}$ is a toroidal 2D-grid for a spinglass model with weights  $\{-1, 1\}$. The
grid has size $n_t \times n_r$. The percentage of edges with negative weights is $50\%$.
    
\item   \textbf{Band instances:} These instances are considered in~\cite{Fakhimi2022TheMK,Hijazi}. Given the number of vertices $n$ and the value of $k$, we let the edge set of a band graph be defined as $E:= \{ \{i,j\}  : j-i \leq k+1,\,\, 1 \leq i<j \leq n
\}$. In these graphs, $50\%$ of edge weights are set to $-1$ and the others are set to $1$. 

\end{itemize}
\subsubsection{Bounding approaches}

In Section~\ref{Sect:MaxKCut} we have shown that the max-$k$-cut problem can be formulated as the discrete SDP~\eqref{Prob:MaxKCutISDP}. The Lagrangian dual problem obtained from this formulation can be solved by any of the {alternative} approaches presented in Section~\ref{Sect:SolvingLagDual}. Note that these approaches were designed for minimization problems, so we multiply the objective function of~\eqref{Prob:MaxKCutISDP} by $-1$ in order to view it as a minimization problem. In our numerical tests, we consider the following bounding approaches for the max-$k$-cut problem: 
\begin{itemize}
    \item \textbf{SDP-B}: This approach refers to solving the basic SDP relaxation of the max-$k$-cut problem, i.e.,~\eqref{MaxKCutSDP}, using the alternating direction method of multipliers (ADMM).
    We follow a similar implementation as  described in~\cite[Section 3.1]{DeMeijerEtAl} for the graph partition problem.

    \item \textbf{LD}: By LD we refer to the Lagrangian dual bound obtained from the discrete SDP~\eqref{Prob:MaxKCutISDP} after dualizing $X \succeq \bold{0}$. Observe that~\eqref{Prob:MaxKCutISDP} fits in the  form~\eqref{Def:LDforBSDP}, where $B = \{\frac{-1}{k-1},1\}$  and $\widehat{\mathcal{D}}^{m_\ell}$ as given in~\eqref{Def:setDhat}. To obtain an edge-packing $\mathcal{K}^p$ that is likely to provide strong bounds, we use the heuristic that is described in Section~\ref{Subsec:EdgePacking}. The resulting Lagrangian dual problem is now solved using the three approaches described in Section~\ref{Sect:SolvingLagDual}:
    \begin{itemize}
    \item \textbf{LD-DSG}: This approach refers to solving the Lagrangian dual problem using the projected-deflected subgradient algorithm discussed in Section~\ref{Subsec:ProjectedSubgradient}. For the deflection parameter $\varphi_1^\ell$, see \eqref{dualUpdate2}, (observe that $\varphi^\ell_2$ does not exist as we linearize only the PSD constraint), we use the update scheme of Sherali and Ulular~\cite{SheraliUlular}. Moreover, we use Polyak's stepsize update~\cite{Polyak}, see~\eqref{stepsize}, where $U^*$ is computed using the heuristic introduced in Section~\ref{Subsec:lowerbound}. For the value of $\mu_1^\ell$ we use the approach of Held and Karp~\cite{HeldKarp}, implying that $\mu_1^0 = 1$ and we halve its value each time the obtained bound did not increase for $N_\text{step} = 40$ subsequent iterations.
    As starting point $S^0$, we use the (approximate) dual solution that we obtain from the implementation of the ADMM mentioned above.
    For the stopping criteria, we use $N_\text{stag} = 100$ and $\varepsilon_1 = 0.01$ (observe that $\varepsilon_2$ does not exist). These values are based on preliminary experiments. 

    \item \textbf{LD-ASG}: This approach refers to solving the Lagrangian dual problem using the projected-accelerated subgradient algorithm discussed in Section~\ref{Subsec:Nesterov}. The parameters in this approach are $\alpha^\ell$, $\eta_\ell$ and $\mu^\ell_1$ (again, $\beta^\ell$ does not exist, since we do only dualize the PSD constraint). The sequence $\{\eta_\ell\}_\ell^\infty$ is fully determined after taking $\eta_0 = 0$. The parameter $\alpha^\ell$ is computed via~\eqref{stepsize}, where $U^*$ is chosen as described in Section~\ref{Subsec:lowerbound}. With respect to the parameter $\mu^\ell_1$, we discriminate between two cases:
    We take $\mu^\ell_1 = 0.025$ for $n \leq 100$, whereas $\mu^\ell_1 = 0.012$ if $n > 100$. The reason to lower the value of $\mu^\ell_1$ for larger instances is to reduce the oscillating behavior at the start of the algorithm  that is heavier for  larger instances. We use the same starting point $S^0$ as has been used for LD-DSG.
    For the stopping criteria, we take $\varepsilon_1 = 1\mathrm{e-}3$ and $N_\text{stag} = 15$, which are based on preliminary tests.

    \item \textbf{LD-Bundle}: This approach refers to solving the Lagrangian dual problem using the projected bundle  algorithm described in Section~\ref{Subsec:Projectedbundle}. 
    As starting point for the proximity parameter $t$, we use $t = \kappa |z_{SDP} - U^*|/||X^*||_F$, where $\kappa > 0$ is a constant, $X^*$ is an approximate solution to the SDP relaxation~\eqref{MaxKCutSDP},  $z_{SDP}$ its corresponding objective value and $U^*$ is the bound as computed by the heuristic of Section~\ref{Subsec:lowerbound}. We take $\kappa$  equal to $0.1$ if $n \leq 100$ and $\kappa = 0.2$ otherwise. The parameter $\rho$ for deciding whether we take a serious or a null step is set to $\frac{1}{2}$. 

    The problem~\eqref{MinMaxBundle} is solved iteratively, that is, we iteratively solve with respect to either $\sigma$ or $Q$, while keeping the other variable fixed.
    We continue doing this until the norm of the difference between consecutive values of $\sigma$ is less than $1\mathrm{e}{-4}$, or after the maximum number of 20 iterations are reached. The entire bundle algorithm terminates whenever the function value of the current center point and the function value of the subgradient model at the trial point is smaller than $1\mathrm{e}{-3}$ or if the maximum number of 600 iterations is reached.

    \end{itemize}
\end{itemize}
\subsubsection{Edge-packing heuristic} \label{Subsec:EdgePacking}
    Recall that the Lagrangian dual bound depends on the edge-packing $\mathcal{K}^p$. To determine an edge-packing that is likely to provide a strong Lagrangian dual bound, we apply the following heuristic. Let $X^*$ denote an (approximate) solution to the SDP relaxation~\eqref{MaxKCutSDP}. For each triple of vertices $i,j,h \in V$, we define 
    \begin{align*}
        v^{\Delta}_{ijh} & := (X^*_{ij} + X_{jh}^* - X_{ih}^* -1)^+ + (X^*_{ij} - X_{jh}^* + X_{ih}^* -1)^+ + (-X^*_{ij} + X_{jh}^* + X_{ih}^* -1)^+         
    \intertext{and for each $S\subseteq V$ with $|S| = k+1$, we define }
        v^{C}_{S} & := \left( -\frac{k}{2} - \sum_{i,j \in S, i < j }X^*_{ij}\right)^+,
    \end{align*}
    where $(\cdot )^+ = \max( 0, \cdot)$. Indeed, $v_{ijh}^\Delta$ and $v_S^C$ provide  measures of the violation of the triangle- and clique-inequalities~\eqref{triangleIneq} and~\eqref{cliqueIneq}, respectively. When a triple $(i,j,h)$ or a subset $S$ violates these inequalities by a large amount, we have an incentive to include this triple/set within the edge-packing $\mathcal{K}^p$, as we expect its inclusion to have a positive effect on the resulting bound. Now, we proceed as follows. We let $\overline{E}$ denote the set of edges that are already included in the packing, where we initially set $\overline{E} = \emptyset$. We start with a set $K$ containing the three vertices $i,j,h$ that have the largest positive value $v^\Delta_{ijh}$. We greedily add to $K$ the vertex $t$ that maximizes the total violation 
    \begin{align*}
            \sum_{i, j \in K, i < j}v_{tij}^\Delta + \sum_{\substack{S \subseteq [n] : |S| = k+1, \\
            t \in S, S \setminus t \subseteq K}} v_S^C 
    \end{align*}
    associated with vertex $t$ and the vertices already in $K$. We restrict ourselves to the vertices $t$ such that $\{(t,k) \, : \, \, k \in K\}$ does not intersect with $\overline{E}$, preventing an edge from being packed twice. 
    We continue adding vertices to $K$ until $|K| = p$ or no more vertex with positive violation can be found. Then, we add $K$ to the edge-packing $\mathcal{K}^p$ and add to $\overline{E}$ all edges in the subgraph induced by $K$. We repeat this process starting from three vertices whose induced subgraph is not yet covered by $\overline{E}$, until $\mathcal{K}^p$ contains a maximum number of subsets. This maximum is in our computations set equal to $5n$.

\subsubsection{Computation of near-optimal {\em k}-partitions} \label{Subsec:lowerbound}
    To apply Polyak's stepsize update~\cite{Polyak}, see~\eqref{stepsize}, one needs a lower bound $U^*$ on the optimal value of the max-$k$-cut problem. Hence, we aim to find near-optimal feasible solutions to the max-$k$-cut problem.  We proceed as follows. Let $X^*$ denote an (approximate) solution to the SDP relaxation~\eqref{MaxKCutSDP} of the max-$k$-cut problem. The approximation algorithm introduced by Frieze and Jerrum~\cite{Frieze1995ImprovedAA}, in the sequel denoted by the FJ algorithm, applies a rounding strategy on $X^*$ to obtain a feasible max-$k$-cut. As the algorithm is probabilistic, we perform 500 independent trials of the FJ algorithm and take its best solution. 
    
    Next, we try to improve this feasible solution using a variable depth search~\cite{KernighanLin}. Let $V_0 = V$ and compute for each vertex $v \in V_0$ the solution we obtain from moving $v$ to its best alternative cluster (i.e., the one that leads to the largest cut value). Let $v_1$ denote the vertex which movement leads to the best improvement (even if the cost improvement is negative) and delete $v_1$ from $V_0$. We repeat the procedure until $V_0$ is empty, which leads to an ordered list of vertices $\{v_1, \ldots, v_n\}$. Finally, we compute the value of $T \in [n]$ such that the simultaneous movement of the vertices $\{v_1, \ldots, v_T\}$ to their best alternative cluster results in the best possible cut. The resulting feasible $k$-partition can be encoded by a matrix $Y \in \{\frac{-1}{k-1}, 1\}^{n \times n}$ where   $Y_{ij} = \frac{-1}{k-1}$  if $i$ and $j$ are in different subsets of the partition, and $Y_{ij}=1$ otherwise.
    Inspired by the heuristic of Rendl et al.~\cite{RendlEtAl}, we  replace $X^*$ by $\omega X^* + (1-\omega) Y$, where $\omega \in (0,1)$, and repeat the entire process again using this new solution $X^*$. We continue doing so until no better solution can be found by either the FJ algorithm or the variable-depth search. Based on preliminary tests, we take $\omega = 0.8$.

\subsection{Computational results for the max-cut problem}
\label{Subsec:numeric2cut}
In this section we present computational results for the max-cut problem.

Table~\ref{Tab:rudy_max2cut_values} shows average bound values for the max-cut problem on several types of Rudy instances. The column `$n$' denotes the number of vertices, whereas the columns `SDP-B' and `OPT' denote the SDP bound~\eqref{MaxKCutSDP} and the known optima (as presented in~\cite{HrgaPovh,BiqMac}), respectively, averaged over all 10 instances of that class. For each instance, we performed the approaches
LD-DSG, LD-ASG and LD-Bundle to compute the Lagrangian dual bound. Although the resulting bounds obtained by these approaches slightly differ, the difference is always within 1\% (relative to the minimum of the three). The columns `LD' provide this minimal value. The values in the columns `Rel.~gap closed' are computed by the formula $100\cdot (\text{SDP} - \text{LD})/(\text{SDP}-\text{OPT})$ and denote the percentage of the gap between the SDP bound and the optimum that is closed by the Lagrangian dual bound. In Table~\ref{Tab:rudy_max2cut_values}  we only present results for $p=3$ and $p=7$, and results for   $p$ ranging between $p = 3$ and $p = 23$ are given in Figure~\ref{Fig:max2cut_ImprovementP}.
The corresponding computation times (in wall clock time) are presented in Table~\ref{Tab:rudy_max2cut_times}. Here we do make a distinction between the approaches LD-ASG, LD-ASG and LD-Bundle.

Observe that Table~\ref{Tab:rudy_max2cut_values} and~\ref{Tab:rudy_max2cut_times} only include a subset of the instance types of the Rudy instances. The reason for this is that for the large instances with $n = 180$, optima are known for only six out of nine instance types~\cite{HrgaPovh}. To keep the presentation balanced, we only present results for six instance types of the small instances as well.

\begin{table}[h]
\footnotesize
\setlength{\tabcolsep}{5pt}
\centering
\begin{tabular}{@{}rrSSSSSS@{}}
\toprule
\multicolumn{1}{c}{\textbf{}}                                                         & \multicolumn{1}{c}{\textbf{}}    & \multicolumn{1}{c}{\textbf{}}    & \multicolumn{1}{c}{\textbf{}}    & \multicolumn{2}{c}{\textbf{LD bound for $\mathbf{p = 7}$}}                                                                              & \multicolumn{2}{c}{\textbf{LD bound for $\mathbf{p = 17}$}}                                                                             \\ \cmidrule(l){5-6} \cmidrule(l){7-8} 
\multicolumn{1}{c}{\textbf{\begin{tabular}[c]{@{}c@{}}Instance\\ Class\end{tabular}}} & \multicolumn{1}{c}{\textbf{$\mathbf{n}$}} & \multicolumn{1}{c}{\textbf{SDP-B}} & \multicolumn{1}{c}{\textbf{OPT}} & \multicolumn{1}{c}{\textbf{LD}} & \multicolumn{1}{c}{\textbf{\begin{tabular}[c]{@{}c@{}}Rel. gap \\ closed (\%)\end{tabular}}} & \multicolumn{1}{c}{\textbf{LD}} & \multicolumn{1}{c}{\textbf{\begin{tabular}[c]{@{}c@{}}Rel. gap \\ closed (\%)\end{tabular}}} \\ \midrule
g05\_80                                                                               & 80                               & 950.4                            & 929.1                            & 940.2                           & 47.7                                                                                         & 939.5                           & 51.4                                                                                         \\
pm1d\_80                                                                              & 80                               & 297.3                            & 254.5                            & 276.9                           & 47.6                                                                                         & 274.9                           & 52.3                                                                                         \\
pm1s\_100                                                                             & 100                              & 139.7                            & 122.6                            & 131.7                           & 46.5                                                                                         & 131.1                           & 50.1                                                                                         \\
pw05\_100                                                                             & 100                              & 8364.7                           & 8147.1                           & 8273.8                          & 41.8                                                                                         & 8268.1                          & 44.4                                                                                         \\
w01\_100                                                                              & 100                              & 790.7                            & 699.9                            & 742.3                           & 53.3                                                                                         & 739.1                           & 56.9                                                                                         \\
w09\_100                                                                              & 100                              & 2544.8                           & 2176.9                           & 2381.8                          & 44.3                                                                                         & 2372.3                          & 46.9                                                                                         \\ \midrule
g05\_180                                                                              & 180                              & 4582.6                           & 4494.6                           & 4557.7                          & 28.2                                                                                         & 4557.8                          & 28.2                                                                                         \\
pm1d\_180                                                                             & 180                              & 1042.3                           & 870.5                            & 990.2                           & 30.3                                                                                         & 989.0                           & 31.0                                                                                         \\
pw05\_180                                                                             & 180                              & 25952.2                          & 25347.3                          & 25771.8                         & 29.8                                                                                         & 25765.7                         & 30.8                                                                                         \\
pw09\_180                                                                             & 180                              & 43515.2                          & 42964.7                          & 43355.1                         & 29.1                                                                                         & 43350.5                         & 29.9                                                                                         \\
w05\_180                                                                              & 180                              & 4671.1                           & 3913.1                           & 4447.6                          & 29.5                                                                                         & 4441.5                          & 30.3                                                                                         \\
w09\_180                                                                              & 180                              & 6173.5                           & 5167.8                           & 5888.6                          & 28.3                                                                                         & 5879.4                          & 29.3                                                                                         \\ \bottomrule
\end{tabular}
\caption{Average bound values (SDP-B, LD and optimum) for the max-cut problem on Rudy instances. Each reported value is the average over 10 randomly generated instances of that type. \label{Tab:rudy_max2cut_values}}
\end{table}

\begin{table}[h]
\centering
\footnotesize
\begin{tabular}{@{}rrSSSSSSS@{}}
\toprule
\multicolumn{1}{c}{\textbf{}}                                                         & \multicolumn{1}{c}{\textbf{}}             & \multicolumn{1}{c}{\textbf{}}    & \multicolumn{3}{c}{\textbf{LD bound for $\mathbf{p = 7}$}}                                                         & \multicolumn{3}{c}{\textbf{LD bound for $\mathbf{p = 17}$}}                                                        \\ \cmidrule(l){4-6} \cmidrule(l){7-9}
\multicolumn{1}{c}{\textbf{\begin{tabular}[c]{@{}c@{}}Instance\\ Class\end{tabular}}} & \multicolumn{1}{c}{\textbf{$\mathbf{n}$}} & \multicolumn{1}{c}{\textbf{SDP-B}} & \multicolumn{1}{c}{\textbf{LD-DSG}} & \multicolumn{1}{c}{\textbf{LD-ASG}} & \multicolumn{1}{c}{\textbf{LD-Bundle}} & \multicolumn{1}{c}{\textbf{LD-DSG}} & \multicolumn{1}{c}{\textbf{LD-ASG}} & \multicolumn{1}{c}{\textbf{LD-Bundle}} \\ \midrule
g05\_80                                                                               & 80                                        & 0.19                             & 1.05                                & 0.99                                & 1.56                                   & 12.07                               & 11.42                               & 5.87                                   \\
pm1d\_80                                                                              & 80                                        & 0.10                             & 1.10                                & 0.87                                & 1.49                                   & 13.33                               & 10.81                               & 5.98                                   \\
pm1s\_100                                                                             & 100                                       & 0.92                             & 2.52                                & 2.64                                & 3.21                                   & 17.68                               & 18.45                               & 8.57                                   \\
pw05\_100                                                                             & 100                                       & 0.28                             & 2.82                                & 1.61                                & 5.96                                   & 26.89                               & 15.27                               & 16.15                                  \\
w01\_100                                                                              & 100                                       & 0.36                             & 3.21                                & 2.55                                & 5.61                                   & 26.81                               & 22.42                               & 14.28                                  \\
w09\_100                                                                              & 100                                       & 0.46                             & 3.23                                & 1.94                                & 6.31                                   & 30.46                               & 16.13                               & 16.68                                  \\ \midrule
g05\_180                                                                              & 180                                       & 2.74                             & 10.65                               & 10.16                               & 14.16                                  & 68.83                               & 55.45                               & 28.51                                  \\
pm1d\_180                                                                             & 180                                       & 1.54                             & 11.10                               & 9.42                                & 12.88                                  & 84.46                               & 63.38                               & 30.54                                  \\
pw05\_180                                                                             & 180                                       & 0.96                             & 12.62                               & 9.03                                & 27.29                                  & 101.44                              & 58.83                               & 50.01                                  \\
pw09\_180                                                                             & 180                                       & 0.96                             & 11.65                               & 8.09                                & 27.95                                  & 96.89                               & 55.70                               & 50.84                                  \\
w05\_180                                                                              & 180                                       & 0.92                             & 12.50                               & 9.16                                & 23.90                                  & 99.04                               & 59.71                               & 49.86                                  \\
w09\_180                                                                              & 180                                       & 0.91                             & 12.76                               & 8.96                                & 25.63                                  & 105.31                              & 54.41                               & 50.78                                  \\ \bottomrule
\end{tabular}
\caption{Average computation times (wall clock time in seconds) of Lagrangian dual approaches for the max-cut problem on Rudy instances. Each reported value is the average over 10 randomly generated instances of that type. \label{Tab:rudy_max2cut_times}}
\end{table}

It can be observed from Table~\ref{Tab:rudy_max2cut_values} that the Lagrangian dual bound provides a significant improvement over the bound~\eqref{MaxKCutSDP}. The relative gap closed by the Lagrangian dual for the instances with $n \leq 100$ is in the range 41\%-57\%. For the larger instances with $n = 180$, this gap is in the range 28\%-31\%. This drop aligns with our expectations, since if $n$ increases, the number of subsets of $[n]$ with size at most $p$ also increases. Hence, the relative number of subsets $K_\ell \subseteq [n]$ for which the constraint $X[K_\ell] \in \widehat{\mathcal{D}}^{m_\ell}$ is included in the discrete SDP becomes lower as $n$ increases, leading to a weaker bound. When $p$ increases, we observe that the Lagrangian dual bound becomes stronger, which is indeed in line with Theorem~\ref{THM:HierarchyBounds}. This comes at the cost of computation time, since the evaluation of the dual function $g(S,\lambda)$ becomes more costly. Indeed, this evaluation involves a full enumeration over the elements in $\widehat{\mathcal{D}}^{m_\ell}$, whose cardinality increases (drastically) with $p$. This effect is clearly visible in Table~\ref{Tab:rudy_max2cut_times}.  

When comparing the various approaches for computing the Lagrangian dual bound, we observe from Table~\ref{Tab:rudy_max2cut_times} that  for small $p$ LD-ASG is the fastest approach on almost all instance types, although LD-DSG follows on a short distance. Apparently, the accelerated two-step approach by Nesterov~\cite{Nesterov1983AMF} pays off, which is reflected by a lower number of iterations required to converge compared to LD-DSG.  For example, for Rudy instances with 
$n=100$, LB-DSG requires  1264 iterations on average to converge, while LD-ASG requires only 884.
The projected-bundle method LD-Bundle yields the largest computation times for $p = 7$. We observe in our experiments that the number of iterations needed by the bundle method is much lower than for the subgradient methods, but one iteration is in general more costly, leading to a larger total computation time. In particular, the bundle method requires  401 iterations on average to converge  for Rudy instances with $n=100$.
At the same time, it seems that the bundle method is more robust against (small) changes in the parameters, whereas such changes cause a large effect on the performance of the subgradient methods. This is a well-known short-coming of first-order methods, see e.g.,~\cite{Frangioni2017OnTC}. This robustness is also reflected by our experiments for larger values of $p$. As explained before, $p$ has a negative effect on the computation time of the Lagrangian dual bound. This effect is mainly due to an increase in the computation time per iteration due to a full enumeration. Since the bundle method needs the least number of iterations, the approach LD-Bundle becomes the favored method when $p$ becomes large. 

To further study the effect of $p$ on the Lagrangian dual bound, we present in Figure~\ref{Fig:max2cut_ImprovementP} the (average) relative gap closed and the (average) computation time per instance type for different values of $p$. For $n \leq 100$, we test for $3 \leq p \leq 23$. For $p$ larger than 23, we observe that we can no longer perform the computations within one hour. For the large Rudy instances with $n = 180$, we observe that this frontier is reached earlier, hence we present results for $3 \leq p \leq 19$. Different from Table~\ref{Tab:rudy_max2cut_values} and~\ref{Tab:rudy_max2cut_times}, we now take into account all instance types for which optimal values are known. The Lagrangian dual bounds are computed via the approach LD-DSG, but results obtained by the other two algorithms are similar. 

Figure~\ref{Fig:max2cut_ImprovementP} demonstrates that the value of $p$ has a positive impact on the quality of the bound, resulting in a significant reduction in the gap between the obtained bounds and the optimal solution. We observe that the marginal improvement over $p$ is, however, diminishing. That is, an improvement in $p$ from 3 to 5 has a large effect on the quality of the Lagrangian dual bounds, whereas this positive effect tails off when $p$ becomes larger. At the same time, the computation times remain relatively small for values of $p$ up to 20 (resp.~15) for the small (resp.~large) instance. For larger values of $p$, we observe that the computation times rapidly grow due to the extremely large cardinality of the involved sets $\widehat{D}^{m_\ell}$, see Appendix~\ref{Appendix:SetPartitions}. 

Our bounds can be further improved by considering overlapping submatrices
in \eqref{Def:LDforBSDPequivalent},  at the cost of increased computational effort. 
However, one can also improve the basic SDP relaxation \eqref{MaxKCutSDP} by adding various valid inequalities.
 The authors of \cite{RendlEtAl} strengthen the basic SDP relaxation for the max-cut problem by adding the triangle inequalities and solve the resulting relaxation using a dynamic version of the bundle method. Their bounds dominate ours.

\begin{figure}[h]
    \centering

    \begin{subfigure}[b]{0.49\textwidth}
    \centering
        \includegraphics[scale=0.5]{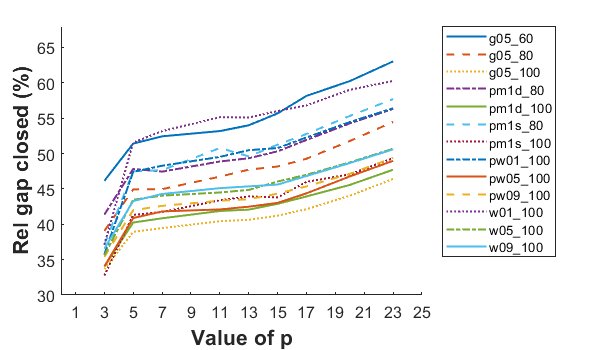}
        \caption{Rel.~gap closed for the max-cut problem on small Rudy instances}
    \end{subfigure} \hfill 
    \begin{subfigure}[b]{0.49\textwidth}
    \centering
        \includegraphics[scale=0.5]{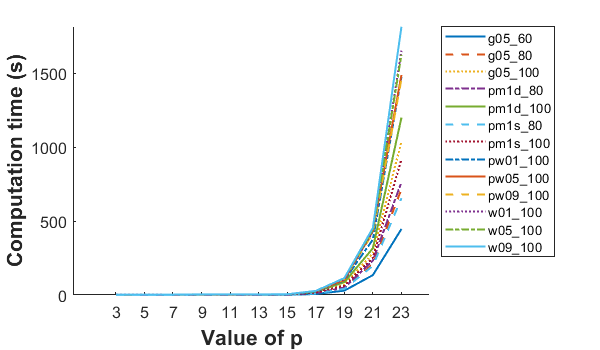}
        \caption{Computation times for the max-cut problem on small Rudy instances}
    \end{subfigure} 
    \begin{subfigure}[b]{0.49\textwidth}
    \centering
        \includegraphics[scale=0.5]{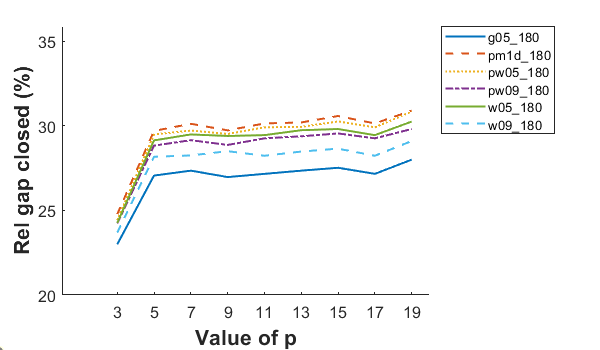}
        \caption{Rel.~gap closed for the max-cut problem on large Rudy instances}
    \end{subfigure} \hfill 
    \begin{subfigure}[b]{0.49\textwidth}
    \centering
        \includegraphics[scale=0.5]{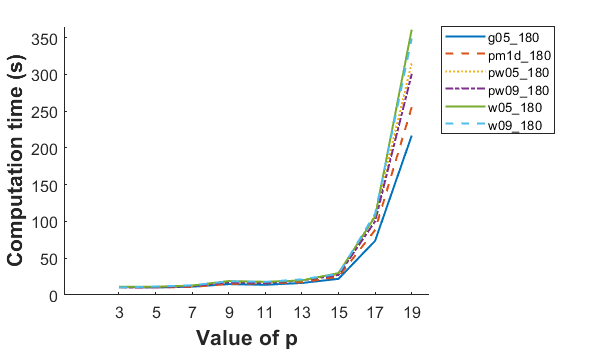}
        \caption{Computation times for the max-cut problem on large Rudy instances}
    \end{subfigure}
    \caption{Average relative gap (between SDP~\eqref{MaxKCutSDP} and optimum) closed and average computation times of the approach LD-DSG for different values of $p$. Results are presented for the max-cut problem on small and large Rudy instances.   \label{Fig:max2cut_ImprovementP}}
\end{figure}

\subsection{Computational results for the max-3-cut problem} \label{Subsec:numeric3cut}
Although there are several studies on solving the max-$k$-cut problem for  $k>2$ \cite{Anjos2013,SousaAnjos,Sousa2020ComputationalSO,Lu2021ABA},  they do not provide bounds at the root node.
Here, we discuss our findings for the max-$3$-cut problem. For this problem we first perform experiments on the same instances as for the max-cut problem, leading to Table~\ref{Tab:rudy_max3cut_values} and~\ref{Tab:rudy_max3cut_times} and Figure~\ref{Fig:max3cut_ImprovementP}. Although the set-up of the experiments and the explanation of the columns in the tables is the same, there are two differences. First, due to the larger computation times of the Lagrangian dual bounds for the max-3-cut problem, we test up to $p \leq 15$ (resp.~$p \leq 13$) for the smaller (resp.~larger) instances. For that reason, we report results for $p = 13$ instead of $p = 17$ in Table~\ref{Tab:rudy_max3cut_values} and~\ref{Tab:rudy_max3cut_times}. These larger computation times can be explained from the fact that the sets $\widehat{\mathcal{D}}^{m_\ell}$ have a (significantly) larger cardinality when $k$ increases, see Appendix~\ref{Appendix:SetPartitions}. Moreover, since optimal solutions for the max-3-cut problem and those instances are not available in the literature, we instead report the lower bounds as obtained by the procedure explained in Section~\ref{Subsec:lowerbound}. Consequently, the relative gap closed is computed by the formula $100\cdot (\text{SDP} - \text{LD})/(\text{SDP}-\text{LB})$.

When considering Table~\ref{Tab:rudy_max3cut_values} and~\ref{Tab:rudy_max3cut_times}, we draw very similar conclusions as for the max-cut problem. Again, the Lagrangian dual bounds provide a substantial improvement compared to the basic SDP bound~\eqref{MaxKCutSDP}, although the relative gaps closed are smaller than for the max-cut problem. This can be explained by the fact that we compare to a lower bound instead of the optimum.

The projected-accelerated version of the subgradient algorithm seems to perform best for small values of $p$, whereas the projected-bundle method becomes favorable for large values of $p$. The conclusions drawn from Figure~\ref{Fig:max3cut_ImprovementP} are similar to those drawn from Figure~\ref{Fig:max2cut_ImprovementP}. 

We also conduct experiments on  the Band instances~\cite{Fakhimi2022TheMK,Hijazi} in Table~\ref{Tab:Band_max3cut}. For those instances, the optimal values are given in \cite{Fakhimi2022TheMK}. Note that for the instances band50\_3, band100\_3 and band150\_3, our bounds for $p=13$, when rounded down, provide optimal solutions to the problems.

\begin{table}[h]
\centering\footnotesize
\begin{tabular}{@{}rrSSSSSS@{}}
\toprule
\multicolumn{1}{c}{\textbf{}}                                                         & \multicolumn{1}{c}{\textbf{}}             & \multicolumn{1}{c}{\textbf{}}    & \multicolumn{1}{c}{\textbf{}}   & \multicolumn{2}{c}{\textbf{LD bound for $\mathbf{p = 7}$}}                                                                    & \multicolumn{2}{c}{\textbf{LD bound for $\mathbf{p = 13}$}}                                                                   \\ \cmidrule(l){5-6} \cmidrule(l){7-8}  
\multicolumn{1}{c}{\textbf{\begin{tabular}[c]{@{}c@{}}Instance\\ Class\end{tabular}}} & \multicolumn{1}{c}{\textbf{$\mathbf{n}$}} & \multicolumn{1}{c}{\textbf{SDP-B}} & \multicolumn{1}{c}{\textbf{LB}} & \multicolumn{1}{c}{\textbf{LD}} & \multicolumn{1}{c}{\textbf{\begin{tabular}[c]{@{}c@{}}Rel. gap\\ closed (\%)\end{tabular}}} & \multicolumn{1}{c}{\textbf{LD}} & \multicolumn{1}{c}{\textbf{\begin{tabular}[c]{@{}c@{}}Rel. gap\\ closed (\%)\end{tabular}}} \\ \midrule
g05\_80                                                                               & 80                                        & 1251.5                           & 1215.0                          & 1246.4                          & 13.8                                                                                        & 1245.8                          & 15.4                                                                                        \\
pm1d\_80                                                                              & 80                                        & 368.8                            & 296.8                           & 356.5                           & 17.0                                                                                        & 355.7                           & 18.2                                                                                        \\
pm1s\_100                                                                             & 100                                       & 172.6                            & 142.2                           & 167.5                           & 16.6                                                                                        & 167.0                           & 18.2                                                                                        \\
pw05\_100                                                                             & 100                                       & 11007.3                          & 10620.2                         & 10962.5                         & 11.6                                                                                        & 10957.9                         & 12.8                                                                                        \\
w01\_100                                                                              & 100                                       & 954.8                            & 807.9                           & 922.2                           & 22.2                                                                                        & 918.9                           & 24.4                                                                                        \\
w09\_100                                                                              & 100                                       & 3171.1                           & 2568.4                          & 3076.4                          & 15.7                                                                                        & 3068.0                          & 17.1                                                                                        \\ \midrule
g05\_180                                                                              & 180                                       & 6074.1                           & 5906.5                          & 6061.2                          & 7.7                                                                                         & 6060.6                          & 8.1                                                                                         \\
pm1d\_180                                                                             & 180                                       & 1316.2                           & 997.6                           & 1287.5                          & 9.0                                                                                         & 1285.9                          & 9.5                                                                                         \\
pw05\_180                                                                             & 180                                       & 34320.1                          & 33191.3                         & 34230.3                         & 8.0                                                                                         & 34223.9                         & 8.5                                                                                         \\
pw09\_180                                                                             & 180                                       & 57789.1                          & 56744.6                         & 57699.9                         & 8.5                                                                                         & 57694.8                         & 9.0                                                                                         \\
w05\_180                                                                              & 180                                       & 5923.4                           & 4617.3                          & 5789.4                          & 10.3                                                                                        & 5783.3                          & 10.7                                                                                        \\
w09\_180                                                                              & 180                                       & 7838.3                           & 5988.2                          & 7671.3                          & 9.0                                                                                         & 7660.4                          & 9.6                                                                                         \\ \bottomrule
\end{tabular}
\caption{Average bound values (SDP-B, LD and LB) for the max-3-cut problem on Rudy instances. Each reported value is the average over 10 randomly generated instances of that type. \label{Tab:rudy_max3cut_values}}
\end{table}

\begin{table}[h]
\centering
\footnotesize
\begin{tabular}{@{}rrSSSSSSS@{}}
\toprule
\multicolumn{1}{c}{\textbf{}}                                                         & \multicolumn{1}{c}{\textbf{}}             & \multicolumn{1}{c}{\textbf{}}    & \multicolumn{3}{c}{\textbf{LD bound for $\mathbf{p = 7}$}}                                                         & \multicolumn{3}{c}{\textbf{LD bound for $\mathbf{p = 13}$}}                                                        \\ \cmidrule(l){4-6} \cmidrule(l){7-9}
\multicolumn{1}{c}{\textbf{\begin{tabular}[c]{@{}c@{}}Instance\\ Class\end{tabular}}} & \multicolumn{1}{c}{\textbf{$\mathbf{n}$}} & \multicolumn{1}{c}{\textbf{SDP-B}} & \multicolumn{1}{c}{\textbf{LD-DSG}} & \multicolumn{1}{c}{\textbf{LD-ASG}} & \multicolumn{1}{c}{\textbf{LD-Bundle}} & \multicolumn{1}{c}{\textbf{LD-DSG}} & \multicolumn{1}{c}{\textbf{LD-ASG}} & \multicolumn{1}{c}{\textbf{LD-Bundle}} \\ \midrule
g05\_80                                                                               & 80                                        & 0.07                             & 1.50                                & 0.85                                & 1.44                                   & 41.73                               & 22.46                               & 17.26                                  \\
pm1d\_80                                                                              & 80                                        & 0.05                             & 1.81                                & 0.80                                & 1.81                                   & 48.40                               & 22.85                               & 17.55                                  \\
pm1s\_100                                                                             & 100                                       & 0.34                             & 2.74                                & 2.26                                & 3.17                                   & 55.62                               & 43.95                               & 27.23                                  \\
pw05\_100                                                                             & 100                                       & 0.23                             & 4.10                                & 1.56                                & 5.09                                   & 96.14                               & 33.50                               & 38.28                                  \\
w01\_100                                                                              & 100                                       & 0.20                             & 4.17                                & 2.36                                & 5.01                                   & 85.00                               & 51.36                               & 32.39                                  \\
w09\_100                                                                              & 100                                       & 0.41                             & 4.86                                & 1.95                                & 5.19                                   & 110.85                              & 34.80                               & 39.82                                  \\ \midrule
g05\_180                                                                              & 180                                       & 0.81                             & 11.51                               & 7.35                                & 13.66                                  & 352.37                              & 209.35                              & 147.64                                 \\
pm1d\_180                                                                             & 180                                       & 0.57                             & 13.82                               & 7.79                                & 14.63                                  & 414.38                              & 210.28                              & 159.96                                 \\
pw05\_180                                                                             & 180                                       & 0.94                             & 14.58                               & 7.78                                & 17.21                                  & 500.36                              & 204.30                              & 162.29                                 \\
pw09\_180                                                                             & 180                                       & 0.84                             & 15.75                               & 7.36                                & 17.58                                  & 526.15                              & 197.02                              & 168.00                                 \\
w05\_180                                                                              & 180                                       & 1.18                             & 16.64                               & 8.44                                & 17.76                                  & 504.47                              & 220.80                              & 160.83                                 \\
w09\_180                                                                              & 180                                       & 1.48                             & 17.98                               & 8.77                                & 19.77                                  & 567.58                              & 215.09                              & 180.73                                 \\ \bottomrule
\end{tabular}
\caption{Average computation times of Lagrangian dual approaches for the max-3-cut problem on Rudy instances. Each reported value is the average over 10 randomly generated instances of that type. \label{Tab:rudy_max3cut_times}}
\end{table}

\begin{figure}[h]
    \centering
    \begin{subfigure}[b]{0.49\textwidth}
    \centering
        \includegraphics[scale=0.5]{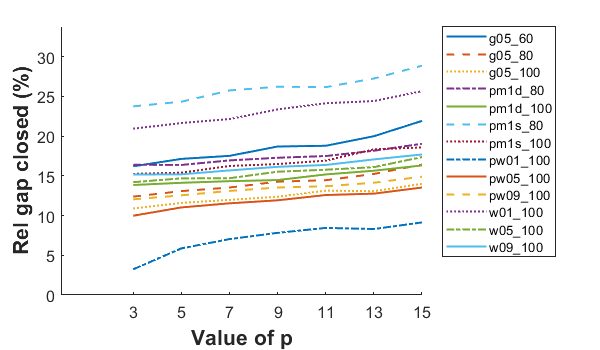}
        \caption{Rel.~gap closed for the max-3-cut problem on small Rudy instances}
    \end{subfigure} \hfill 
    \begin{subfigure}[b]{0.49\textwidth}
    \centering
        \includegraphics[scale=0.5]{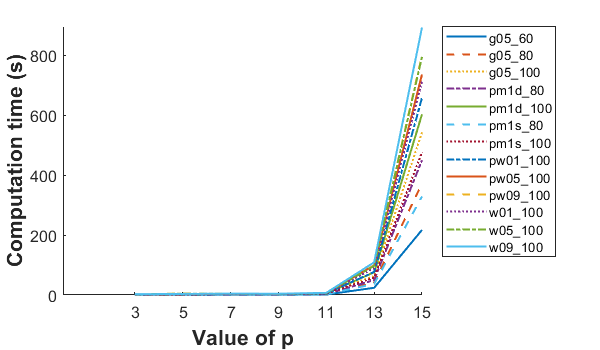}
        \caption{Computation times for the max-3-cut problem on small Rudy instances}
    \end{subfigure} 
    \begin{subfigure}[b]{0.49\textwidth}
    \centering
        \includegraphics[scale=0.5]{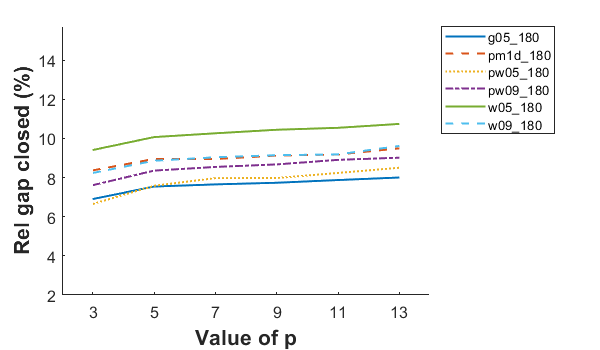}
        \caption{Rel.~gap closed for the max-3-cut problem on large Rudy instances}
    \end{subfigure} \hfill 
    \begin{subfigure}[b]{0.49\textwidth}
    \centering
        \includegraphics[scale=0.5]{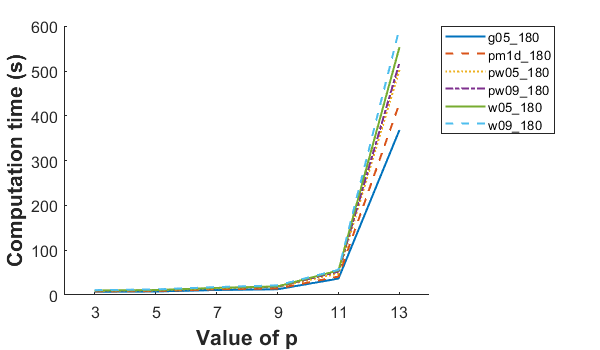}
        \caption{Computation times for the max-3-cut problem on large Rudy instances}
    \end{subfigure}
    \caption{Relative gap closed and computation times of the approach LD-DSG for different values of $p$. Results are presented for the max-$3$-cut problem on small and large Rudy instances.    \label{Fig:max3cut_ImprovementP}}
\end{figure}

\begin{table}[h]
\footnotesize
\centering
\begin{tabular}{@{}rrrrrrrrrrrr@{}}
\toprule
\multicolumn{1}{c}{\textbf{}}                                                          & \multicolumn{1}{c}{\textbf{}}             & \multicolumn{1}{c}{\textbf{}}             & \multicolumn{1}{c}{\textbf{}}             & \multicolumn{1}{c}{\textbf{}}    & \multicolumn{1}{c}{\textbf{}}    & \multicolumn{3}{c}{\textbf{LD bound for $\mathbf{p = 3}$}}                                                                                                                                                            & \multicolumn{3}{c}{\textbf{LD bound for $\mathbf{p = 13}$}}                                                                                                                                                            \\ \cmidrule(l){7-9} \cmidrule(l){10-12}
\multicolumn{1}{c}{\textbf{\begin{tabular}[c]{@{}c@{}}Instance \\ Class \\ \end{tabular}}} & \multicolumn{1}{c}{\textbf{$\mathbf{n}$}} & \multicolumn{1}{c}{\textbf{$\mathbf{m}$}} & \multicolumn{1}{c}{\textbf{$\mathbf{k}$}} & \multicolumn{1}{c}{\textbf{SDP-B}} & \multicolumn{1}{c}{\textbf{OPT}} & \multicolumn{1}{c}{\textbf{LD}} & \multicolumn{1}{c}{\textbf{\begin{tabular}[c]{@{}c@{}}Rel. gap\\ closed \\ (\%)\end{tabular}}} & \multicolumn{1}{c}{\textbf{\begin{tabular}[c]{@{}c@{}}Comp.\\ time (s)\end{tabular}}} & \multicolumn{1}{c}{\textbf{LD}} & \multicolumn{1}{c}{\textbf{\begin{tabular}[c]{@{}c@{}}Rel. gap\\ closed \\ (\%)\end{tabular}}} & \multicolumn{1}{c}{\textbf{\begin{tabular}[c]{@{}c@{}}Comp.\\ time (s)\end{tabular}}} \\ \midrule
band50\_3   & 50   & 190 & 3  &  53.5  &  49 &  50.8 & 60.0 & 0.46 & {\bf 49.8} & 82.8 & 25.10 \\
band100\_3  & 100  & 390 & 3  &  107.6 &  99  & 101.8 & 67.4  & 1.16 & {\bf 99.7} & 91.9 & 48.52 \\
band150\_3  &  150 & 590 & 3  &  161.7 & 150 & 155.7 & 51.3 & 4.57 & {\bf 150.6} &  94.9  & 160.88 \\
band200\_3  & 200 & 790 & 3  & 215.8 & 199 & 206.1 & 57.7 & 8.72 & 200.8 & 89.3 & 324.15 \\
band250\_3  & 250 & 990 & 3 & 269.9 & 249 & 263.5 & 30.6 & 10.01 & 255.0 & 71.3 & 312.14 \\
\bottomrule
\end{tabular}
\caption{Bounds (SDP-B, LD and optimum), relative gap closed by the Lagrangian dual bound and computation times for LD-ASG for the max-$3$-cut problem on Band instances. \label{Tab:Band_max3cut} }
\end{table}

\subsection{Computational results for the max-4-cut problem} \label{Subsec:numeric4cut}

For the max-4-cut problem, we test our approaches on the Band instances~\cite{Fakhimi2022TheMK,Hijazi}. Table~\ref{Tab:band_max4cut_values} shows the bound values, the optima as reported in~\cite{Fakhimi2022TheMK} and the relative gap closed by the Lagrangian dual bound for various values of $p$. The column names are similar as in previous sections. Table~\ref{Tab:band_max4cut_times} and Figure~\ref{Fig:band_max4cut} show the corresponding computation times of the Lagrangian dual approach, where we restrict ourselves to the approach LD-ASG, as this turned out to be the most efficient procedure for small values of $p$, see Sections~\ref{Subsec:numeric2cut} and~\ref{Subsec:numeric3cut}. 

We observe that the Lagrangian dual bounds are significantly improving over the basic SDP bound SDP-B, with a relative gap closed by the Lagrangian dual bound 
ranging between 19--38\% for small values of $p$ ($p = 3,5$) to 65--74\% for  $p = 13$. Interestingly, these gaps seem not to depend much on the value of $n$ and are high also for larger instances. Also, the diminishing marginal effect over $p$ that we observed for the rudy instances, seems not to be present for these type of instances. We are unable to compute the bounds for $n=15$  due to memory limitations. Indeed, Figure~\ref{Fig:band_max4cut} suggests that the relative gap closed is improving at a fairly constant rate over $p$. 

\begin{table}[h]
\centering \footnotesize
\begin{tabular}{@{}rrSSSSSSSS@{}}
\toprule
\multicolumn{1}{c}{\textbf{}}                                                         & \multicolumn{1}{c}{\textbf{}}             & \multicolumn{1}{c}{\textbf{}}    & \multicolumn{1}{c}{\textbf{}}    & \multicolumn{6}{c}{\textbf{Relative gap closed by LD bound (\%)}}                                                                                                                                                                                                                                           \\ \cmidrule(l){5-10} 
\multicolumn{1}{c}{\textbf{\begin{tabular}[c]{@{}c@{}}Instance Class\end{tabular}}} & \multicolumn{1}{c}{\textbf{$\mathbf{n}$}} & \multicolumn{1}{c}{\textbf{SDP-B}} & \multicolumn{1}{c}{\textbf{OPT}} & \multicolumn{1}{c}{\textbf{$\mathbf{p = 3}$}} & \multicolumn{1}{c}{\textbf{$\mathbf{p = 5}$}} & \multicolumn{1}{c}{\textbf{$\mathbf{p = 7}$}} & \multicolumn{1}{c}{\textbf{$\mathbf{p = 9}$}} & \multicolumn{1}{c}{\textbf{$\mathbf{p = 11}$}} & \multicolumn{1}{c}{\textbf{$\mathbf{p = 13}$}} \\ \midrule
band50\_4                                                                             & 50                                        & 68.9                             & 59                               & 37.5                                          & 34.2                                          & 41.0                                          & 46.9                                          & 62.1                                           & 73.6                                           \\
band100\_4                                                                            & 100                                       & 138.7                            & 117                              & 34.2                                          & 28.8                                          & 35.8                                          & 49.9                                          & 51.5                                           & 65.0                                           \\
band150\_4                                                                            & 150                                       & 208.6                            & 175                              & 20.2                                          & 29.0                                          & 35.9                                          & 48.7                                          & 53.9                                           & 64.4                                           \\
band200\_4                                                                            & 200                                       & 278.4                            & 234                              & 20.3                                          & 28.4                                          & 33.9                                          & 48.9                                          & 53.0                                           & 67.1                                           \\
band250\_4                                                                            & 250                                       & 348.3                            & 292                              & 19.9                                          & 30.3                                          & 35.3                                          & 46.5                                          & 54.6                                           & 66.4                                           \\ \bottomrule
\end{tabular}
\caption{Bound values (SDP-B and optimum) and relative gap closed by the Lagrangian dual bound for various values of $p$ for the max-4-cut problem on Band instances. \label{Tab:band_max4cut_values} }
\end{table}

\begin{table}[h]
\centering 
\footnotesize 
\begin{tabular}{@{}rrSSSSSS@{}}
\toprule
\multicolumn{1}{c}{\textbf{}}                                                         & \multicolumn{1}{c}{\textbf{}}             & \multicolumn{6}{c}{\textbf{Computation times (s) for LD-ASG}}                                                                                                                                                                                                                                   \\ \cmidrule(l){3-8} 
\multicolumn{1}{c}{\textbf{\begin{tabular}[c]{@{}c@{}}Instance Class\end{tabular}}} & \multicolumn{1}{c}{\textbf{$\mathbf{n}$}} & \multicolumn{1}{c}{\textbf{$\mathbf{p = 3}$}} & \multicolumn{1}{c}{\textbf{$\mathbf{p = 5}$}} & \multicolumn{1}{c}{\textbf{$\mathbf{p = 7}$}} & \multicolumn{1}{c}{\textbf{$\mathbf{p = 9}$}} & \multicolumn{1}{c}{\textbf{$\mathbf{p = 11}$}} & \multicolumn{1}{c}{\textbf{$\mathbf{p = 13}$}} \\ \midrule
band50\_4                                                                             & 50                                        & 0.27                                          & 0.21                                          & 0.28                                          & 0.43                                          & 8.52                                           & 172.10                                         \\
band100\_4                                                                            & 100                                       & 1.41                                          & 1.28                                          & 1.56                                          & 1.86                                          & 22.80                                          & 377.78                                         \\
band150\_4                                                                            & 150                                       & 2.61                                          & 5.33                                          & 5.81                                          & 6.95                                          & 63.29                                          & 994.69                                         \\
band200\_4                                                                            & 200                                       & 4.63                                          & 10.77                                         & 10.55                                         & 12.87                                         & 102.51                                         & 1764.86                                        \\
band250\_4                                                                            & 250                                       & 7.33                                          & 16.63                                         & 18.10                                         & 20.68                                         & 153.30                                         & 2616.91                                        \\ \bottomrule
\end{tabular}
\caption{Computation times in seconds of Lagrangian dual approach LD-ASG for different values of $p$ for the max-4-cut problem on Band instances. \label{Tab:band_max4cut_times}}
\end{table}

\begin{figure}[h]
    \centering
    \includegraphics[width=0.45\linewidth]{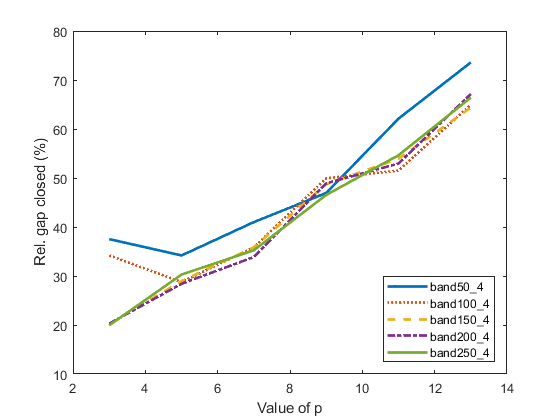}
    \caption{Relative gap closed (in \%) by Lagrangian dual approach LD-ASG for the max-$4$-cut problem on Band instances for different values of $p$. \label{Fig:band_max4cut}}
\end{figure}

\subsection{Computational results for the Spinglass instances} \label{Subsec:numericspinglass}

The final class of instances for which we present results are the Spinglass instances~\cite{DeSimone}.
 In \cite{Ghaddar2011ABA} are listed root node bounds for the Spinglass instances ${\rm spinglass2pm}_{n_t,n_r}$ where  $n_t\in \{4,5,6,7,8,9\}$. Our level $p=7$ bound on those instances close the gap to the optimal solution within a few seconds. Specifically, for the largest instance, it takes approximately 2.8 seconds.

In Table~\ref{Tab:spinglass_maxkcut} we present bounds, optima and relative gap closed
for $p = 3$ and $p = 9$ on a set of the Spinglass instances. We compute Lagrangian dual bounds for the max-2-, max-3- and max-4-cut problem. Optima for those problems and the Spinglass instances are taken from~\cite{Fakhimi2022TheMK}. Also,
computation times for the approach LD-ASG are shown. 

We observe that our Lagrangian dual bounds perform very well on these
special structured instances. Already for $p = 3$, we observe
that the relative gap closed is at least about 40\%,
and in some cases as large as 58\%. For $p = 9$, we observe a significant 
improvement, leading to a relative gap closed within the
range 64-80\%. These bounds can be computed relatively fast,
with computation times below a minute for almost all considered
instances.

\begin{table}[h]
\footnotesize
\centering
\begin{tabular}{@{}cccccccccccc@{}}
\toprule
\multicolumn{1}{c}{\textbf{}}                                                          & \multicolumn{1}{c}{\textbf{}}             & \multicolumn{1}{c}{\textbf{}}             & \multicolumn{1}{c}{\textbf{}}             & \multicolumn{1}{c}{\textbf{}}    & \multicolumn{1}{c}{\textbf{}}    & \multicolumn{3}{c}{\textbf{LD bound for $\mathbf{p = 3}$}}                                                                                                                                                            & \multicolumn{3}{c}{\textbf{LD bound for $\mathbf{p = 9}$}}                                                                                                                                                            \\ \cmidrule(l){7-9} \cmidrule(l){10-12} 
\multicolumn{1}{c}{\textbf{\begin{tabular}[c]{@{}c@{}}Instance \\ Class \\ $(n_t \times n_r)$\end{tabular}}} & \multicolumn{1}{c}{\textbf{$\mathbf{n}$}} & \multicolumn{1}{c}{\textbf{$\mathbf{m}$}} & \multicolumn{1}{c}{\textbf{$\mathbf{k}$}} & \multicolumn{1}{c}{\textbf{SDP-B}} & \multicolumn{1}{c}{\textbf{OPT}} & \multicolumn{1}{c}{\textbf{LD}} & \multicolumn{1}{c}{\textbf{\begin{tabular}[c]{@{}c@{}}Rel. gap\\ closed \\ (\%)\end{tabular}}} & \multicolumn{1}{c}{\textbf{\begin{tabular}[c]{@{}c@{}}Comp.\\ time (s)\end{tabular}}} & \multicolumn{1}{c}{\textbf{LD}} & \multicolumn{1}{c}{\textbf{\begin{tabular}[c]{@{}c@{}}Rel. gap\\ closed \\ (\%)\end{tabular}}} & \multicolumn{1}{c}{\textbf{\begin{tabular}[c]{@{}c@{}}Comp.\\ time (s)\end{tabular}}} \\ \midrule
$12 \times   12$                                                                       & 144                                       & 288                                       & 2                                         & 113.6                            & 104                              & 108.0                           & 57.9                                                                                        & 13.51                                                                                 & 106.5                           & 73.7                                                                                        & 15.16                                                                                 \\
$12 \times 13$                                                                         & 169                                       & 338                                       & 2                                         & 128.1                            & 114                              & 121.1                           & 49.4                                                                                        & 22.92                                                                                 & 118.2                           & 70.4                                                                                        & 23.31                                                                                 \\
$14 \times 14$                                                                         & 196                                       & 392                                       & 2                                         & 150.0                            & 132                              & 141.4                           & 47.8                                                                                        & 35.34                                                                                 & 137.2                           & 71.3                                                                                        & 39.05                                                                                 \\
$15 \times 15$                                                                         & 225                                       & 450                                       & 2                                         & 167.1                            & 146                              & 156.5                           & 50.0                                                                                        & 38.72                                                                                 & 152.0                           & 71.5                                                                                        & 40.78                                                                                 \\
$16 \times 16$                                                                         & 256                                       & 521                                       & 2                                         & 197.2                            & 178                              & 187.1                           & 52.7                                                                                        & 55.11                                                                                 & 183.3                           & 72.6                                                                                        & 63.02                                                                                 \\
$12 \times 12$                                                                         & 144                                       & 288                                       & 3                                         & 129.3                            & 120                              & 124.8                           & 48.5                                                                                        & 18.06                                                                                 & 121.9                           & 79.2                                                                                        & 22.86                                                                                 \\
$12 \times 13$                                                                         & 169                                       & 338                                       & 3                                         & 150.5                            & 138                              & 145.1                           & 43.7                                                                                        & 24.15                                                                                 & 142.4                           & 64.7                                                                                        & 27.64                                                                                 \\
$14 \times 14$                                                                         & 196                                       & 392                                       & 3                                         & 175.6                            & 161                              & 169.2                           & 44.0                                                                                        & 29.07                                                                                 & 164.7                           & 74.8                                                                                        & 28.66                                                                                 \\
$15 \times 15$                                                                         & 225                                       & 450                                       & 3                                         & 197.8                            & 179                              & 189.3                           & 45.3                                                                                        & 36.70                                                                                 & 184.4                           & 71.3                                                                                        & 41.14                                                                                 \\
$16 \times 16$                                                                         & 256                                       & 521                                       & 3                                         & 229.4                            & 211                              & 221.1                           & 45.1                                                                                        & 47.83                                                                                 & 216.5                           & 70.2                                                                                        & 74.92                                                                                 \\
$12 \times 12$                                                                         & 144                                       & 288                                       & 4                                         & 130.2                            & 120                              & 126.2                           & 39.5                                                                                        & 10.84                                                                                 & 123.0                           & 70.6                                                                                        & 14.92                                                                                 \\
$12 \times 13$                                                                         & 169                                       & 338                                       & 4                                         & 152.1                            & 139                              & 146.8                           & 39.9                                                                                        & 10.37                                                                                 & 143.6                           & 64.6                                                                                        & 15.82                                                                                 \\
$14 \times 14$                                                                         & 196                                       & 392                                       & 4                                         & 177.4                            & 162                              & 171.3                           & 39.5                                                                                        & 16.17                                                                                 & 166.6                           & 70.0                                                                                        & 25.75                                                                                 \\
$15 \times 15$                                                                         & 225                                       & 450                                       & 4                                         & 199.8                            & 180                              & 191.7                           & 40.9                                                                                        & 22.24                                                                                 & 185.1                           & 74.3                                                                                        & 35.53                                                                                 \\
$16 \times 16$                                                                         & 256                                       & 512                                       & 4                                         & 231.8                            & 213                              & 224.2                           & 40.3                                                                                        & 39.01                                                                                 & 219.2                           & 67.2                                                                                        & 41.54                                                                                 \\ \bottomrule
\end{tabular}
\caption{Bounds (SDP-B, LD and optimum), relative gap closed by the Lagrangian dual bound and computation times for LD-ASG for the max-$4$-cut problem on the Spinglass instances. \label{Tab:spinglass_maxkcut} }
\end{table}

\section{Conclusion} \label{sect:conclusion}
In this work we consider the Lagrangian duality theory for MISDPs and show its potential to provide strong bounds for problems that can be modeled as a MISDP.

Starting from a MISDP in standard form, we introduce the Lagrangian relaxation and the associated Lagrangian dual problem, see~\eqref{Eq:LD}. Since this problem exploits both positive semidefiniteness and integrality, we show that the resulting Lagrangian dual bound is always at least as strong as the continuous SDP relaxation, see Theorem~\ref{Thm:AlternativeLD} and 
Corollary~\ref{Cor:sandwich}. Also, we consider conditions under which the Lagrangian dual bound attains its maximum and minimum value, see Theorem~\ref{Thm:StrongDual}. For the case of integer SDPs, we extend on the aforementioned theory, and introduce a hierarchy of Lagrangian dual bounds by exploiting the structure of integer PSD matrices, see Table~\ref{Tab:IntegerPSDSets}. The level-$p$ bound is obtained as the Lagrangian dual of an ISDP, see~\eqref{Def:LDforBSDP}, having additional constraints on some of its $p$-by-$p$ submatrices. 

In Section~\ref{Sect:SolvingLagDual} we present three algorithms for solving the Lagrangian dual problem, each of them finding its roots in non-smooth optimization. To the best of our knowledge, the projected-deflected and the projected-accelerated subgradient algorithms, see Algorithms~\ref{Alg:projectedSubgradient} and~\ref{Alg:projectedAcceleratedSubgradient}, are the first implementations of subgradient methods to obtain ISDP bounds. We also present a projected  bundle algorithm in Algorithm~\ref{Alg:projectedBundle}, which differs from bundle methods in the literature. Instead of solving an SDP in each iteration, our algorithm enumerates over a discrete set, which can be done efficiently for small sizes of $p$.

The three proposed algorithms are exploited to obtain Lagrangian dual bounds for the max-$k$-cut problem in Section~\ref{Subsec:MaxCutNumerics}. The Lagrangian dual bounds are stronger than their continuous SDP counterparts on all considered instances and often by a significant amount. 
The bounds become stronger for larger levels in the hierarchy, although the marginal improvement diminishes over $p$. The relative gap closed by the Lagrangian dual bound (with $p = 13$) compared to the continuous SDP bound can be as large as 70\% for certain graph types. When comparing the three algorithms, we observe that the projected-accelerated subgradient algorithm is favored for small values of $p$, whereas the projected bundle algorithm becomes superior when $p$ becomes large. This computational advantage of the bundle algorithm upon the subgradient algorithms is evident from the number of iterations required to achieve the given accuracy. Since the computation times of the three proposed algorithms is moderate, this opens perspectives for exploiting Lagrangian dual bounds in a branching framework.

\typeout{}
\bibliography{QCCP_QTSP}

\newpage
\begin{appendix}

\section{Set partitions and their cardinalities} \label{Appendix:SetPartitions}
In Section~\ref{sect:Hierarchy of Lagrangian dual} we introduced the sets $\mathcal{S}^n_+ (B,r)$, where we claim that for small values of $n$ we can fully enumerate over these sets. This enumeration involves 
the combinatorial structure of so-called set partitions, i.e., partitions of the set of integers from $1$ to $n$. As a result, the cardinalities of $\mathcal{S}^n_+ (B,r)$ can be expressed in terms of some well-known combinatorial quantities. These quantities are presented below.

\paragraph{Stirling number of the second kind}
A Stirling number of the second kind, see e.g.,~\cite{ConwayGuy}, counts the number of possibilities to partition a set of $n$ elements into $k$ non-empty subsets. It is denoted by $\stirlingii{n}{k}$. 
In particular, $\stirlingii{0}{0} = 1$, $\stirlingii{0}{k} = 0$ for all $k > 0$ and $\stirlingii{n}{1} = 1$ for all $n \geq 1$. For higher values of $n$ and $k$, the recurrence relation $\stirlingii{n+1}{k} = k\stirlingii{n}{k} + \stirlingii{n}{k-1}$ can be applied.

\paragraph{Bell number}
The $n$th Bell number, denoted by $B_n$, equals the number of partitions of a set of $n$ elements~\cite{ConwayGuy}. 
We have $B_0 = B_1 = 1$ and higher Bell numbers can be obtained by the recurrence relation $B_{n+1} = \sum_{k=0}^n  {n \choose k }B_k$. Since the Bell number takes into account all possible partitions of a set (irrespective of the number of subsets), the $n$th Bell number is the sum over all Stirling numbers of the second kind with $n$ fixed, i.e.,
\begin{align*}
    B_n = \sum_{k=0}^{n} \stirlingii{n}{k}. 
\end{align*}

\paragraph{$\bold{B}$-type Stirling number of the second kind}
Let us consider partitions of the extended set of integers $\{-n, \ldots, n\}$. We call a partition $\pi$ of $\{-n, \ldots, n\}$ a $B$-type (or signed) partition~\cite{SaganSwanson} if (1) $\pi = -\pi$, i.e., for any subset $B \in \pi$, it holds that $-B = \{-b \, : \, b \in B\} \in \pi$, and (2) $\pi$ contains just one subset $B_0$ such that $B_0 = -B_0$, which is called the 0-subset. For example, the following partition of $\{-5, \ldots, 5\}$ is a $B$-type partition: 
\begin{align*}
    \{\{1,-5\}, \{2\}, \{-4, -3, 0, 3, 4\}, \{-2\}, \{-1,5\}\},
\end{align*}
that consists of 5 subsets. The $B$-type Stirling number of the second kind $S_B(n,k)$ counts the number of $B$-type partitions of $\{-n, \ldots, n\}$ that consist of exactly $k$ pairs of subsets (excluding the 0-subset). Indeed, $S_B(n,k)$ is the $B$-type equivalent of the Stirling number of the second kind $\stirlingii{n}{k}$.

\paragraph{Dowling number}
The $n$th Dowling number, denoted by $D_n$, equals the total number of $B$-type partitions of the set $\{-n, \ldots, n\}$. 
Since the number of subset pairs in any $B$-type partition of $\{-n, \ldots, n\}$ can range from $0$ to $n$, we have
\begin{align*}
    D_n = \sum_{k = 0}^n S_B(n,k).
\end{align*}
Thus, the $n$th Dowling number has the same relationship with the $B$-type Stirling number of the second kind as the $n$th Bell number has with the regular Stirling number of the second kind.

\medskip 
\paragraph{Relationship with $\mathbf{\mathcal{S}_+^n(B,r)}$}
Here, we  couple 
the above-mentioned quantities to the cardinality of the sets $\mathcal{S}^n_+(B,r)$. For $B = \{0,1\}$ and $1 \leq r \leq n$, the elements in $\mathcal{S}^n_+(\{0,1\},r)$ are of the form $X = \sum_{i=1}^r x_ix_i^\top$, where $x_i \in \{0,1\}^n$ and each $x_i$ and $x_j$ have non-overlapping support.
Each $x_i$ can be thought of as the characteristic vector of a subset of $\{1, \ldots, n\}$. If we add to this set an additional element $0$, an element $X \in \mathcal{S}^n_+(\{0,1\},r)$ can be seen as a partition of the set $\{0, \ldots, n\}$, where the set that contains the $0$-element corresponds to all integers that are not in the support of one of the $x_i$'s. Hence, there exists a bijection between the elements in $\mathcal{S}^n_+(\{0,1\},r)$ and all partitions of a set of $n+1$ elements that consists of at most $r+ 1$ subsets. We conclude
\begin{align*}
    |\mathcal{S}^n_+(\{0,1\},r)| = \sum_{k = 0}^{r+1} \stirlingii{n+1}{k}. 
\end{align*}
In particular, when $r = n$, we have 
\begin{align*}
    |\mathcal{S}^n_+(\{0,1\})| = |\mathcal{S}^n_+(\{0,1\},n)| = \sum_{k = 0}^{n+1} \stirlingii{n+1}{k} = B_{n+1}. 
\end{align*}
Let us consider $B = \{0, \pm 1\}$. The elements in $\mathcal{S}^n_+(\{0,\pm 1\},r)$ are of the form $X = \sum_{i=1}^r x_ix_i^\top$, where $x_i \in \{0,\pm 1\}^n$ and each $x_i$ and $x_j$ have non-overlapping support. Each $x_i$ can be seen as the characteristic vector of a subset of $\{-n, \ldots, n\}$, where we include $-u$ if $(x_i)_u = -1$ and $u$ if $(x_i)_u = 1$ (and we do not include $u$ or $-u$ if $(x_i)_u = 0$). Observe that the characteristic vector $-x_i$ leads to the \textit{same} partition of $\{-n, \ldots, n\}$. A matrix $X = \sum_{i=1}^r x_ix_i^\top$ then corresponds to a partition of $\{-n, \ldots, n\}$, where the elements that are not in the support of any $x_i$ are in the same subset as~$0$. Therefore, each matrix $X$ corresponds to a $B$-type partition of $\{-n, \ldots, n\}$. Thus, there exists a bijection between the elements in $\mathcal{S}^n_+(\{0,\pm 1\},r)$ and all $B$-type partitions of the set $\{-n, \ldots, n\}$ consisting of at most $r$ subset pairs. We conclude
\begin{align*}
    |\mathcal{S}^n_+(\{0,\pm 1\},r)| = \sum_{k = 0}^{r} S_B(n,k). 
\end{align*}
For $r = n$, we obtain
\begin{align*}
     |\mathcal{S}^n_+(\{0,\pm 1\})| = |\mathcal{S}^n_+(\{0,\pm 1\},n)| = \sum_{k = 0}^{n} S_B(n,k) = D_n. 
\end{align*}
We finalize this section by presenting how the cardinalities of the above-mentioned sets evolve over $n$. Table~\ref{Tab:CardSetPartitions} contains the values of $|\mathcal{S}^n_+(B,r)|$ for different $B$, $r$ and $n$, see also~\cite{OEIS}. 

\begin{table}[H]
\centering
\small
\begin{tabular}{@{}lrrrrrrr@{}}
\toprule
\multicolumn{1}{c}{\textbf{Set}}           & \multicolumn{7}{c}{\textbf{Cardinality}}                                                                                                                                                                                 \\
\multicolumn{1}{c}{}              & \multicolumn{1}{c}{$n = 3$} & \multicolumn{1}{c}{$n = 4$} & \multicolumn{1}{c}{$n = 5$} & \multicolumn{1}{c}{$n = 6$} & \multicolumn{1}{c}{$n = 7$} & \multicolumn{1}{c}{$n = 8$} & \multicolumn{1}{c}{$n = 9$} \\ \midrule
$|\mathcal{S}_+^n(\{0,1\}, 3)|$   & 15                          & 51                          & 187                         & 715                         & 2795                        & 11051                       & 35550                       \\
$|\mathcal{S}_+^n(\{0,1\})|$      & 15                          & 52                          & 203                         & 877                         & 4140                        & 21147                       & 115975                      \\
$|\mathcal{S}_+^n(\{0,\pm1\},3)|$ & 24                          & 115                         & 622                         & 3656                        & 22724                       & 146565                      & 968922                      \\
$|\mathcal{S}_+^n(\{0,\pm1\})|$   & 24                          & 116                         & 648                         & 4088                        & 28640                       & 219920                      & 1832224                     \\ \bottomrule
\end{tabular}
\caption{Number of matrices in $\mathcal{S}_+^n(B,r)$ for different $B$, $r$ and $n$. \label{Tab:CardSetPartitions} }
\end{table}

\end{appendix}
\end{document}